\def\om{\omega}
\def\N{\mathbb{N}}
\renewcommand{\leq}{\leqslant}
\renewcommand{\geq}{\geqslant}
\providecommand{\om}{\omega}
\providecommand{\eps}{\varepsilon}
\renewcommand{\div}{\operatorname{div}}
\newcommand{\dist}{\operatorname{dist}}
\providecommand{\R}{\mathbb{R}}
\providecommand{\N}{\mathbb{N}}
\renewcommand{\leq}{\leqslant}
\renewcommand{\geq}{\geqslant}
\renewcommand{\div}{\operatorname{div}}
\newcommand{\curl}{\operatorname{curl}}
\DeclareMathOperator\supp{supp}
\newtheorem{Theorem}{Theorem}
\newtheorem{Definition}[Theorem]{Definition}
\newtheorem{Proposition}[Theorem]{Proposition}
\newtheorem{Lemma}[Theorem]{Lemma}
\newtheorem{Remark}[Theorem]{Remark}
\date{\today}
\title{On the motion of  a rigid body in a two-dimensional  ideal flow with vortex sheet initial data}
\begin{document}

\author{Franck Sueur\footnote{CNRS, UMR 7598, Laboratoire Jacques-Louis Lions, F-75005, Paris, France}
\footnote{UPMC Univ Paris 06, UMR 7598, Laboratoire Jacques-Louis Lions, F-75005, Paris, France}
}

\maketitle
\begin{abstract}
A famous result by Delort about the two-dimensional  incompressible Euler equations is the existence of weak solutions when the initial vorticity is a bounded Radon measure with distinguished sign and lies in the Sobolev space $H^{-1}$.
In this paper we are interested in the case where there is a rigid body immersed in the fluid moving under the action of the fluid pressure.
We succeed to prove the existence of solutions \textit{\`a la Delort}  in a particular case with a  mirror-symmetry assumption already considered 
by \cite{LNX06}, where it was assumed in addition  that the rigid body is a fixed obstacle. 
The solutions built here satisfy the energy inequality and  the body acceleration is  bounded.
  \end{abstract}
\section{Introduction}
\subsection{Motion of a body in a two-dimensional ideal flow}
We consider the motion of a body $\mathcal{S}  (t)$  in 
 a planar ideal fluid which therefore occupies at time $t$ the set $\mathcal{F}(t) := \R^2 \setminus \mathcal{S} (t)$. 
  We assume that the body is a closed disk  of radius one and has a uniform density $\rho >0$.
 The equations modelling the dynamics of the system read
\begin{eqnarray}
	\displaystyle \frac{\partial u}{\partial t}+(u\cdot\nabla)u + \nabla p =0 && \text{for} \ x\in \mathcal{F}(t), \label{Euler1}\\
	\div u = 0 && \text{for} \ x\in \mathcal{F}(t) , \label{Euler2}
\\	u\cdot \mathbf{n} = h' (t)  \cdot \mathbf{n} && \text{for}  \  x\in \partial \mathcal{S}  (t),   \label{Euler3}
\\    m  h'' (t) &=&  \int_{ \partial \mathcal{S} (t)} p \mathbf{n}   ds ,  \label{Solide1} 
\\		u |_{t= 0} &=& u_0 ,\label{Eulerci2}
\\  	( h (0) , h' (0) )  &=&   (0,  \ell_0 )  . \label{Solideci}
\end{eqnarray}
	Here  $u=(u_1,u_2)$ and  $p$  denote the velocity and pressure fields,  $m = \rho \pi$  denotes  the mass  of the body while the fluid  is supposed to be  homogeneous of density $1$, to simplified the equations,	
	$\mathbf{n}$ denotes the unit outward normal on $\mathcal{F}(t)$,  $ds$ denotes the integration element on the boundary  $\partial \mathcal{S}(t)$ of the body.
In the equation (\ref{Solide1}), 	$ h (t)$ is the position of the center of mass of the body.

 The equations  (\ref{Euler1})  and (\ref{Euler2}) are the incompressible Euler equations,  the  condition (\ref{Euler3}) means that the boundary is impermeable, the equation  (\ref{Solide1}) is  Newton's balance law for linear  momentum:	the fluid  acts on the body through pressure force.

In the system above we omit  the equation for the rotation of the rigid ball,  which yields that  the angular velocity of the rigid body remains constant when time proceeds, since the angular velocity is not involved in the equations  (\ref{Euler1})-(\ref{Solideci}).

 \subsection{Equations in the body frame}
We start by transferring the previous equations in  the body frame.
We define:
\begin{equation*}
	\left\{
	\begin{array}{l}
	v (t,x)=  u(t,x+h(t)),\\
	q (t,x)=p(t,x+h(t)),\\
	 {\ell}(t)= h' (t) .
	\end{array}\right.
	\label{chgtvar}
\end{equation*}
so that the equations  (\ref{Euler1})-(\ref{Solideci})  become
\begin{eqnarray}
	\displaystyle \frac{\partial v}{\partial t}+\left[(v-\ell)\cdot\nabla\right]v +\nabla q =0 && x\in \mathcal{F}_{0} , \label{Euler11}\\
	\div v = 0 && x\in \mathcal{F}_{0} , \label{Euler12}\\
	v\cdot \mathbf{n} =\ell \cdot \mathbf{n} && x\in \partial \mathcal{S}_0, \label{Euler13}\\
	m\ell'(t)=\int_{\partial \mathcal{S}_0} q \mathbf{n} \ ds  & & \label{Solide11}\\
	v(0,x)= v_0 (x) && x\in \mathcal{F}_{0} ,\label{Euler1ci}\\
	\ell(0)= \ell_0  . \label{Solide1ci}
\end{eqnarray}
where $ \mathcal{S}_{0} $ denotes the closed unit disk, which is the set initially occupied by the solid and  $\mathcal{F}_{0}  := \R^2  \setminus   \mathcal{S}_{0} $ is the one occupied by the fluid.

 \subsection{Vortex sheets data}
 \label{CD0}
Such a problem has been tackled by \cite{ort2} in the case of a smooth initial data with finite kinetic energy,  by  \cite{GS} in the case of Yudovich-like solutions (with bounded vorticities)
and by \cite{shrinking} in the case where the initial vorticity of the fluid has a $L^p_{c}$ vorticity with $p>2$. The index $c$ is used here and in the sequel for ``compactly supported''.
These works provided  the global existence of  solutions.
Actually the result of \cite{ort2} was extended  in \cite{ort1} to the case of a solid of arbitrary form, for which rotation has to be taken into account, and the 
works  \cite{GS} and \cite{shrinking}  deal with an arbitrary form as well. 
Furthermore we will address in a separate paper the case of an initial vorticity in  $L^p_{c}$  with $p>1$,  in order to achieve the 
investigation of solutions ``\`a la DiPerna-Majda", referring here to the seminal work \cite{DiPernaMajda} in the case of a fluid alone.

It is therefore natural to try to extend these existence results  to the case, more singular,  of vortex sheet initial data.
In the case of a fluid alone, without any moving body, vortex sheet motion is a classical topic in fluid dynamics.
Several approaches have been tried. Here we will follow the approach initiated by J.-M. Delort who proved global-in-time existence of weak solutions for the incompressible Euler equations when the initial vorticity is a compactly supported, bounded Radon measure with distinguished sign in the Sobolev space $H^{-1}$.
The pressure smoothness in Delort's result is very bad so that it could be a-priori argued that 
the extension to the case of an immersed body should be challenging since the motion of the solid is determined by the pressure forces exerted by the fluid on the solid boundary. 
However the problem \eqref{Euler11}--\eqref{Solide1ci} admits a global weak formulation  where the pressure disappears.
The drawback is that test functions involved in this weak formulation do not vanish on the interface between the solid and the fluid, 
an unusual fact in Delort's approach, where the solution rather satisfies a weak formulation of the equations which involves some test functions compactly supported in the fluid domain (which is open) and the boundary condition is prescribed in a trace sense.

Yet in the  paper \cite{LNX01}, the authors deal with the case of an initial vorticity compactly supported, bounded Radon measure with distinguished sign in $H^{-1}$ in the upper half-plane, superimposed on its odd reflection in the lower half-plane. The corresponding  initial velocity is then mirror symmetric with respect to the horizontal axis.
 In the course of proving the existence of solutions to this problem, they are led to introduce another notion of weak solution that they called boundary-coupled weak solution, which relies on a weak vorticity formulation which involves some test functions that vanish on the boundary, but not their derivatives.
They have extended their analysis to the case of a fluid occupying the exterior of a symmetric fixed body in 
\cite{LNX06}.

 \subsection{Mirror symmetry}
\label{notat}

In this paper, we assume that the initial velocities $\ell_0$ and $v_0$ are mirror symmetric with respect to the horizontal axis given by the  equation $x_2 = 0$. Our setting here can therefore be seen as an extension of the one in \cite{LNX06} from the case of a fixed obstacle to the case of a moving body.

For the body velocity  $\ell_0 \in \R^2$ the mirror symmetry  entails that $\ell_0$ is of the form $\ell_0 =(\ell_{0,1},0)$.
Let us now turn our attention to the fluid velocity. 
Let
$$\mathcal{F}_{0,\pm} := \{ x\in \mathcal{F}_{0}/  \ \pm x_{2} > 0 \}
\text{ and } \Gamma_\pm := \partial \mathcal{F}_{0,\pm} .$$
If $x=(x_1 ,x_2 ) \in \mathcal{F}_{0,\pm}$ then we denote $\tilde{x} := (x_1 , -x_2 ) \in \mathcal{F}_{0,\mp}$.
To avoid any confusion let us say here that for a smooth vector field $u= (u_1 ,u_2 )$, the  mirror symmetry assumption means that  for any $x  \in \mathcal{F}_{0,\pm}$,  $(u_1 ,u_2 ) (\tilde{x} ) = (u_1 , - u_2 ) (x)$.

This assumption has two important consequences. First  the vorticity $\omega := \curl v := \partial_1 v_2 -   \partial_2 v_1$ is odd with respect to the variable $x_2$  and therefore its integral over the fluid domain $\mathcal{F}_{0}$ vanishes. The other one is that  the circulation of the initial velocity around the body vanishes.
In such a case, it is very natural to consider finite energy velocity. 

We will explain why in the next section by considering the link between velocity and vorticity.

Let us also mention that the analysis performed here can be adapted to the case of a body occupying  a smooth, bounded, simply connected closed set, which is   symmetric with respect to the horizontal coordinate axis, and which is not allowed to rotate (for instance because of   the action of an exterior torque on the body preventing any rotation, or because the angular mass is infinite).
 \subsection{A velocity decomposition}
We denote by 
\begin{eqnarray*}
G(x,y) := \frac{1}{2\pi} \ln \frac{ | x-y| }{ | x-y^{*}| \,  | y| } , \text{ where } y^{*} := \frac{y}{| y^{*} |^{2}}  , 
\end{eqnarray*}
 the Green's function of $\mathcal{F}_{0}$ with Dirichlet boundary condition. We also introduce the functions
\begin{eqnarray}
\label{BSexplicit}
H(x) :=  \frac{(x-y)^\perp}{2\pi | x - y |^{2}}   \text{ and } K(x,y) :=H(x-y) - H(x-y^{*} ), 
\end{eqnarray}
 with the notation $x^\perp := ( -x_2 , x_1 )$ when $x=(x_1,x_2)$, 
 which are  the kernels of the Biot-Savart operators respectively in the full plane and in $\mathcal{F}_{0}$.
 More precisely we define  the operator  $K[\om]$ as acting on
 $\om\in C^\infty_c (\mathcal{F}_{0})$  through the formula 
\begin{equation*}
  K[\om](x)=\int_{\mathcal{F}_{0}}K(x,y) \om(y) d y.
\end{equation*}
We will extend this definition to bounded Radon measures in the sequel but let us consider here the smooth case first to clarify the presentation. 
We also define the hydrodynamic Biot-Savart  operator  $K_\mathcal{H} [\om]$ by
\begin{equation*}
  K_\mathcal{H}[\om](x)=\int_{\mathcal{F}_{0}} K_\mathcal{H}(x,y) \om(y) d y \text{ with }  K_\mathcal{H}(x,y) := K(x,y) + H(x) .
\end{equation*}
One easily verifies that 
\begin{equation}
\label{infiny}
 \lim_{| x|+| y| \rightarrow +\infty } K_\mathcal{H}(x,y)  =0
 \end{equation}
  and that 
$H$ and $K_\mathcal{H}[\om]$ satisfy
\begin{eqnarray}
\label{PbH}
\div H = 0 , \, \curl H = 0 \text{ in }  \mathcal{F}_{0}  , \,  H \cdot \mathbf{n} = 0 \text{ in } \partial  \mathcal{S}_{0} , \,  \int_{\partial \mathcal{S}_0} H \cdot   \mathbf{n}^\perp ds =  - 1 , \, \lim_{| x| \rightarrow +\infty } H  =  0 ,
\\ \label{PbK}
\div K_\mathcal{H} [\om] = 0 , \, \curl K_\mathcal{H} [\om] = \om \text{ in }  \mathcal{F}_{0}  , \, K_\mathcal{H} [\om]  \cdot \mathbf{n} = 0 \text{ in } \partial  \mathcal{S}_{0} , \,  \int_{\partial \mathcal{S}_0} K_\mathcal{H} [\om]  \cdot   \mathbf{n}^\perp ds = 0 , \, \lim_{| x| \rightarrow +\infty } K_\mathcal{H} [\om] =0   .
\end{eqnarray}

Let us also define the Kirchhoff potentials
\begin{eqnarray*}
\Phi_i (x) := - \frac{x_i  }{ | x|^{2}}  ,
\end{eqnarray*}
which satisfies
\begin{equation}
\label{t1.3sec}
	-\Delta \Phi_i = 0  \text{ for }  \ x\in \mathcal{F}_{0}   , \quad  \Phi_i \rightarrow 0  \text{ for}  \ x \rightarrow  \infty,  \quad	\frac{\partial \Phi_i}{\partial \mathbf{n}}= \mathbf{n}_i     \text{ for }  \  x\in \partial \mathcal{S}_{0}   ,
\end{equation}
for $i=1,2$, where $\mathbf{n}_1 $ and $\mathbf{n}_2$  are the components of the normal vector $\mathbf{n}$.
Let us also observe $\nabla \Phi_i $  is in $C^\infty (\overline{\mathcal{F}_{0}}) \cap L^2 (\mathcal{F}_{0})$,   and that the derivatives of higher orders of $\nabla \Phi_i $ are also in $L^2 (\mathcal{F}_{0})$.

Then we have the following decomposition result :
\begin{Lemma}
\label{decopsmooth}
Let  $\om\in C^\infty_c (\mathcal{F}_{0})$, $\ell:=( \ell_1 , \ell_2) \in \R^2$ and $\gamma \in \R$. Then
there exists one only smooth divergence free vector field $u$ such that 
$u\cdot \mathbf{n} = \ell\cdot  \mathbf{n} $ on $\partial \mathcal{S}_0$, $\int_{\partial \mathcal{S}_0} u \cdot   \mathbf{n}^\perp ds = \gamma $, $\curl u = \om $ in $\mathcal{F}_{0}$ and such that $u$ vanishes at infinity. Moreover $u =  K[\om] +  \ell_1  \nabla  \Phi_1 + \ell_2  \nabla  \Phi_2 + (\alpha -\gamma ) H$, where $\alpha :=  \int_{\mathcal{F}_{0}} \om dx$.
\end{Lemma}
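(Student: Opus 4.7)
The plan is to split the statement into existence and uniqueness. Existence is essentially a checklist verification using the properties \eqref{PbH}--\eqref{PbK} and \eqref{t1.3sec} already established, together with the identity $K_\mathcal{H}(x,y) = K(x,y) + H(x)$ which implies $K_\mathcal{H}[\om] = K[\om] + \alpha H$; the real content is the uniqueness, where one must handle the non–simply–connected character of $\mathcal{F}_0$.

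For existence, I would rewrite the candidate field as $u = K_\mathcal{H}[\om] + \ell_1 \nabla\Phi_1 + \ell_2 \nabla\Phi_2 - \gamma H$ using the above identity, and then check the five required conditions term by term. Divergence-freeness and smoothness follow from the regularity of the Biot–Savart kernel for compactly supported smooth $\om$, of the Kirchhoff potentials on $\overline{\mathcal{F}_0}$, and of $H$ on $\overline{\mathcal{F}_0}$ (it is singular only at $0 \in \mathcal{S}_0$). The curl equation reduces to $\curl K_\mathcal{H}[\om] = \om$ since the other three terms are irrotational. The boundary condition $u\cdot \mathbf{n} = \ell\cdot \mathbf{n}$ follows from $K_\mathcal{H}[\om]\cdot \mathbf{n} = 0$, $H\cdot \mathbf{n} = 0$, and $\nabla\Phi_i \cdot \mathbf{n} = \mathbf{n}_i$. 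The circulation equation uses $\int_{\partial \mathcal{S}_0} \nabla \Phi_i \cdot \mathbf{n}^\perp \, ds = 0$ (as $\Phi_i$ is single-valued and smooth up to the boundary) together with $\int_{\partial \mathcal{S}_0} H\cdot \mathbf{n}^\perp\, ds = -1$. Vanishing at infinity follows from the explicit decay of each piece.

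For uniqueness, let $w$ be the difference of two such fields. Then $w$ is smooth, divergence-free, curl-free in $\mathcal{F}_0$, satisfies $w\cdot \mathbf{n} = 0$ on $\partial \mathcal{S}_0$, has zero circulation on $\partial\mathcal{S}_0$, and vanishes at infinity. Because $\mathcal{F}_0$ has the homotopy type of a circle, divergence-freeness together with the vanishing flux $\int_{\partial\mathcal{S}_0} w\cdot \mathbf{n}\, ds = 0$ (itself a consequence of $w\cdot \mathbf{n} = 0$) produces a globally defined stream function $\psi \in C^\infty(\overline{\mathcal{F}_0})$ with $w = \nabla^\perp \psi$; the zero circulation assumption is exactly what guarantees that $\psi$ is single-valued, which is the delicate point here since in a multiply connected domain a curl-free field need not admit a single-valued stream function. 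Once $\psi$ is single-valued, the conditions translate into: $\Delta \psi = -\curl w = 0$ in $\mathcal{F}_0$, $\psi$ is constant on the connected boundary $\partial\mathcal{S}_0$ (since its tangential derivative equals $-w\cdot \mathbf{n} = 0$), and $\nabla\psi \to 0$ at infinity, so $\psi$ tends to a constant at infinity.

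The hard (but standard) step is to conclude $\psi$ is constant from this Liouville-type statement on the exterior domain; after subtracting a constant I may assume $\psi$ vanishes at infinity and equals a constant $c$ on $\partial\mathcal{S}_0$. Applying the maximum principle for the harmonic function $\psi$ on $\mathcal{F}_0$, together with the behavior at infinity, forces $c = 0$ and then $\psi \equiv 0$, hence $w \equiv 0$. The main obstacle, as noted, is justifying the existence of a single-valued stream function on the non-simply-connected domain $\mathcal{F}_0$, and this is precisely why the hypothesis prescribing the circulation $\gamma$ is necessary for uniqueness.
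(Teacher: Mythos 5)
Your existence verification is fine and matches the paper's checklist approach (the paper simply combines \eqref{PbH}, \eqref{PbK}, \eqref{t1.3sec}). For uniqueness, however, the paper just invokes \cite[Lemma 2.14]{Kikuchi83}, whereas you attempt a direct stream-function argument — a legitimate route, but yours has a genuine gap.

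The gap is twofold, and concerns the role of the zero-circulation hypothesis. First, you assert that ``the zero circulation assumption is exactly what guarantees that $\psi$ is single-valued.'' This is incorrect and contradicts the sentence just before it: a divergence-free field $w$ admits a single-valued stream function $\psi$ with $w=\nabla^\perp\psi$ precisely when the \emph{flux} $\int_{\partial\mathcal{S}_0} w\cdot\mathbf{n}\,ds$ vanishes, which here follows from $w\cdot\mathbf{n}=0$; the circulation $\int_{\partial\mathcal{S}_0} w\cdot\mathbf{n}^\perp\,ds$ plays no role in single-valuedness of $\psi$ (it would govern the scalar potential $\phi$ of a curl-free field, not the stream function of a divergence-free one). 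Second, and more seriously, you infer that because $\nabla\psi\to 0$ at infinity, ``$\psi$ tends to a constant at infinity.'' This is false without further input: $\psi=\ln|x|$ has $\nabla\psi\to 0$ but is unbounded. For a harmonic $\psi$ in an exterior planar domain with $\nabla\psi\to 0$, the asymptotic expansion is $\psi = a\ln|x| + b + O(1/|x|)$, and the logarithmic term is what must be ruled out. The zero circulation of $w$ is exactly what does this: one has $\int_{\partial\mathcal{S}_0} w\cdot\mathbf{n}^\perp\,ds = \int_{\partial\mathcal{S}_0}\nabla\psi\cdot\mathbf{n}\,ds$, which by harmonicity equals $\int_{\partial B_R}\partial_r\psi\,ds \to 2\pi a$, so zero circulation forces $a=0$. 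Only then is $\psi$ bounded, so that (by Kelvin transform or a Liouville-type argument, plus the constant boundary value) $\psi$ is constant and $w\equiv 0$. Your proof as written never uses the circulation hypothesis in a way that excludes the $\ln|x|$ growth, which is precisely the reason that hypothesis is indispensable in a non-simply-connected exterior domain.
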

\begin{proof}
Combining \eqref{PbH}, \eqref{PbK} and \eqref{t1.3sec} we get  the existence part. 
Regarding the uniqueness, it is sufficient to apply \cite[Lemma 2.14]{Kikuchi83}.

\end{proof}
Now our point is that considering some  mirror symmetric velocities $u$ and $\ell$, assuming again that $u$ is smooth with $ \om := \curl u$ in $C^\infty_c (\mathcal{F}_{0})$, one has $\int_{\partial \mathcal{S}_0} u \cdot   \mathbf{n}^\perp ds = 0$ and $ \int_{\mathcal{F}_{0}} \om dx = 0$, so that, according to the previous lemma,   $u =  K[\om] +  \ell_1  \nabla  \Phi_1 $. One then easily infers from the definitions above that $u  \in L^2 (\mathcal{F}_{0} )$.
The kinetic energy  $m \ell^2 + \int_{\mathcal{F}_{0}} u^2 dx$ of the system ``fluid+body'' is therefore finite.
Let us also stress that $u$ can also be written as  $u =  K_\mathcal{H} [\om] +  \ell_1  \nabla  \Phi_1 $. 
Here the advantage of using $K_\mathcal{H} [\om]$ rather than  $K [\om]$ is that we will make use of \eqref{infiny}, which is not satisfied by $K(x,y)$.
 \subsection{Cauchy data}
 \label{CD}
Let us now define properly the Cauchy data we are going to consider in this paper.
For a subset $X$ of $\R^2$ we will use the notation $\mathcal{BM} (X)$ for the set of the bounded measures  over $X$, $\mathcal{BM}_+ (X)$ for the set of the positive measures  over $X$, 
$\mathcal{BM}_c (X)$ the subspace of the measures of $\mathcal{BM} (X)$  which are  compactly supported in  $X$ and, following the terminology of   \cite{LNX06}, we will say that a  $\om \in \mathcal{BM} (\mathcal{F}_{0})$ is nonnegative mirror symmetric (NMS) if it is odd with respect to the horizontal axis and if it is nonnegative in the upper half-plane.
This means that for any $\phi \in C_c (\mathcal{F}_{0} ; \R)$, 
\begin{eqnarray}
\label{distinguo}
\int_{\mathcal{F}_{0}} \phi (x) d\omega (x) = - \int_{\mathcal{F}_{0}} \phi (\tilde{x}) d\omega (x) ,
\end{eqnarray}
 with the notation of Section \ref{notat}.

We now extend the operator $K[\cdot]$ to any $\om \in \mathcal{BM} (\mathcal{F}_{0})$ by defining  $K[\om] \in \mathcal{D}' (\mathcal{F}_{0})$ through the formula
\begin{equation*}
  \forall f \in C^\infty_c (\mathcal{F}_{0}) , \quad <K[\om] , f >= \int_{\mathcal{F}_{0}}G*\curl f \ d \om.
\end{equation*}
Let $\ell_{0,1} \in \R$ and  $\ell_0 =(\ell_{0,1},0)$.
Let  $\omega_{0,+} \in \mathcal{BM}_{c,+} (\mathcal{F}_{0,+})$ and $\omega_{0,-}$  the corresponding measure in $\mathcal{F}_{0,-}$ obtained by odd reflection.
We then denote $\omega_0 := \omega_{0,+} + \omega_{0,-}$  which is in $\mathcal{BM} (\mathcal{F}_{0})$ and is NMS.
We define accordingly the initial fluid velocity by 
$v_0 :=   K[\om_0] +  \ell_{0,1}  \nabla  \Phi_1 $.

 \subsection{Weak formulation}
Let us now give a global weak formulation of the problem by considering -for solution and for test functions- a velocity field on the whole plane, with the constraint to be constant on $ \mathcal{S}_{0} $.
We introduce the following space
\begin{equation*}
	\mathcal{H} =\left\{\Psi\in L^2(\mathbb{R}^2) ;  \quad  \div \Psi =0 \quad \text{in} \ \mathbb{R}^2,\  \nabla \Psi =0 \quad \text{in} \ \mathcal{S}_0 \right\},
\end{equation*}
which is a Hilbert space endowed with the scalar product
\begin{equation}
\label{scalarp}
(\overline{u},\overline{v})_\rho := \int_{\mathbb{R}^2} (\rho \chi_{\mathcal{S}_{0} } + \chi_{\mathcal{F}_{0} })
\overline{u} \cdot \overline{v} = m \, \ell_u \cdot \ell_v + \int_{\mathcal{F}_{0} } u \cdot v  ,
\end{equation}
where the notation $\chi_{A} $ stands for the characteristic funtion of the set $A$, $\ell_u \in \mathbb{R}^2$ and $ u  \in  L^2(\mathcal{F}_{0})$ denote respectively  the restrictions of $\overline{u}$ to $\mathcal{S}_0$ and $\mathcal{F}_{0}$. Let us stress here that because, by definition of  $\mathcal{H}$, $\overline{u}$   is assumed to satisfy the divergence free condition in the whole plane, the normal component of these restrictions have to match on the boundary $\partial \mathcal{S}_0$.
We will denote $\|  \cdot \|_\rho$ the norm associated to $(\cdot ,\cdot )_\rho$.
Let us also introduce $\mathcal{H}_{T}$ the set of the test functions $\Psi$ in $C^1([0,T]; \mathcal{H} )$ with its restriction $\Psi |_{[0,T] \times  \overline{\mathcal{F}_{0}}}$ to the closure of the fluid domain in $C^1_c ( [0,T] \times \overline{\mathcal{F}_{0}})$.
\begin{Definition}[Weak Solution] 
\label{DefWS}
Let be given  $ \overline{v}_0 \in \mathcal{H}$ and $T>0$.
We say that  $ \overline{v} \in C ([0,T]; \mathcal{H}-w)$ is a weak solution of \eqref{Euler11}--\eqref{Solide1ci} in $[0,T]$ if for any test function $\Psi \in \mathcal{H}_{T} $,
\begin{equation}
	(  \Psi(T,\cdot ) , \overline{v}(T,\cdot) )_\rho
	- (  \Psi (0,\cdot ) , \overline{v}_0 )_\rho
	= \int_0^T (\frac{\partial \Psi}{\partial t},\overline{v} )_\rho \ dt
	+\int_0^T \int_{\mathcal{F}_0} v\cdot\left(\left(v-\ell_v \right)\cdot\nabla\right)\Psi \ dx\ dt
	\label{dws2}
\end{equation}
\end{Definition}
Definition \ref{DefWS} is legitimate since a classical solution of \eqref{Euler11}--\eqref{Solide1ci} in $[0,T]$ is also a weak solution. This follows easily from an integration by parts in space which provides
\begin{eqnarray*}
 (\partial_{t} \overline{v} , \Psi  )_\rho 
	 = \int_{\mathcal{F}_0} v \cdot\left(\left(v-\ell_v \right)\cdot\nabla\right)\Psi \ dx ,
\end{eqnarray*}
and then from an  integration by parts in time.

 \subsection{Main result}
Our main result is the following.
\begin{Theorem}
 \label{DelortBody}
 Let be given a Cauchy data  $ \overline{v}_0 \in \mathcal{H}$ as described in Section \ref{CD}. Let $T>0$.
 Then there exists a weak solution  of \eqref{Euler11}--\eqref{Solide1ci} in $[0,T]$. 
 In addition this solution preserves the mirror-symmetry and satisfies the energy inequality: for any $t \in [0,T]$,
 $\| \overline{v} (t,\cdot) \|_\rho \leq \| \overline{v}_0  \|_\rho .$
 Moreover the acceleration $\ell'$ of the body is bounded in $[0,T]$.
 \end{Theorem}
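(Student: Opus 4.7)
The plan is to construct $\overline{v}$ as a weak limit of smoother approximate solutions, following the concentration-cancellation approach of Delort adapted to the mirror-symmetric setting of \cite{LNX06}. First, I would regularize the half-plane vorticity $\om_{0,+}$ by a standard symmetric mollification supported at positive distance both from the horizontal axis and from $\Gamma_+$, and then extend by odd reflection to produce a sequence $\om_0^n \in C^\infty_c(\Om)$ which is NMS, supported in a fixed compact set, converges to $\om_0$ weak-$\star$ in $\bm$, and is uniformly bounded in $\bm \cap \HH$. Setting $\overline{v}_0^n := K[\om_0^n] + \ell_{0,1}\nabla\Phi_1$ on $\Om$, extended by $\ell_0$ on $\cS_0$, gives Cauchy data in $\cH$ that converge to $\overline{v}_0$ in $L^2_\loc$. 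Applying the existence theory of \cite{shrinking} (or \cite{GS}) to these smoother data yields a smooth global solution $(v^n, \ell^n, h^n)$ of \eqref{Euler11}--\eqref{Solide1ci}.

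\textbf{Uniform bounds, compactness, linear terms.} The smooth solutions satisfy energy conservation $\|\overline{v}^n(t)\|_\rho = \|\overline{v}_0^n\|_\rho \le C$. The vorticity $\om^n$ is transported by the divergence-free velocity $v^n - \ell^n$, so its $\bm$-mass, its sign in each half-plane, and (by the classical Biot--Savart argument) its $\HH$-norm are preserved in time. Mirror symmetry is preserved at the smooth level by uniqueness (the reflected solution solves the same problem with the same data), hence $\ell^n(t) = (\ell_1^n(t),0)$ and $\om^n(t,\cdot)$ remains NMS. An $L^\infty$-bound on $(\ell^n)'$ is obtained from the weak form of Newton's law by testing against the time-independent $\Psi \in \cH$ equal to $(1,0)$ on $\cS_0$ and $\nabla\Phi_1$ on $\Om$; the resulting identity yields $|(\ell_1^n)'(t)| \le C$ with $C$ depending only on the conserved energy and on the $\bm$-mass of $\om^n$. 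Banach--Alaoglu together with Ascoli--Arzel\`a then give, along a subsequence, $\om^n \rightharpoonup \om$ weak-$\star$ in $L^\infty(0,T;\bm)$, $v^n \rightharpoonup v$ weak-$\star$ in $L^\infty(0,T;L^2(\Om))$, and $\ell^n \to \ell$ in $C([0,T])$. The energy inequality follows from weak lower semicontinuity; the mirror symmetry passes to the limit; and the time-derivative and linear-in-$v$ contributions to \eqref{dws2} pass to the limit immediately, using the strong convergence of $\ell^n$ for the cross term $-\int v^n \cdot (\ell^n \cdot \nabla)\Psi$.

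\textbf{Main obstacle: the quadratic term.} The hard part is to pass to the limit in the purely quadratic piece $\int_\Om v^n \cdot (v^n \cdot \nabla)\Psi \, dx$, where only weak $L^2$-convergence is available. Using the decomposition $v^n = K_\cH[\om^n] + \ell_1^n \nabla\Phi_1$ reduces this integral to terms that are at most linear in $\om^n$ (handled by the weak convergence of $\om^n$), plus a pure Delort piece
\begin{equation*}
\int_{\Om \times \Om} H_\Psi(x,y) \, d\om^n(x) \, d\om^n(y),
\end{equation*}
where $H_\Psi$ is the symmetrization in $(x,y)$ of a kernel built from $\nabla\Psi$ and $K_\cH$. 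Following \cite{LNX01, LNX06}, I would establish that $H_\Psi$ is bounded and continuous off the diagonal (exploiting the image-charge structure of $K_\cH$ and the decay \eqref{infiny}), and then invoke the combined NMS sign condition and mirror symmetry to preclude any concentration of $\om^n \otimes \om^n$ on the diagonal and on the horizontal symmetry axis, which are the only possible loci where concentration could survive. The delicate adaptation of this concentration-cancellation argument to the moving-body framework, where the solid's boundary enters through the image-charge term in $K_\cH$, is the main obstacle of the proof. Once it is completed, $(\overline{v}, \ell, h)$ satisfies \eqref{dws2}, and the uniform $L^\infty$-bound on $(\ell^n)'$ yields the bounded body acceleration by weak-$\star$ passage to the limit.
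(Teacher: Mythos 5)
The overall blueprint --- regularize, invoke the classical existence theory for smooth data, conserve energy and the $L^1$ mass of $\omega^n$, bound $(\ell^n)'$ via a Kirchhoff-potential test function, and handle the quadratic term by Delort symmetrization --- does coincide with the paper. However, there are two genuine gaps in the hard part of your argument.

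First, you assert that $H_{\Psi^\perp}$ (in the paper's notation $H_{\Psi^\perp}(x,y) = \Psi^\perp(x)\cdot K_\mathcal{H}(x,y) + \Psi^\perp(y)\cdot K_\mathcal{H}(y,x)$) is bounded and continuous off the diagonal for a general test function $\Psi\in\mathcal{H}_T$. This is false. The boundedness of the symmetrized kernel (Proposition \ref{propestK0}/\ref{propestK} in the paper, taken from \cite{Schochet95} and \cite{LNX06}) holds when $f=\Psi^\perp$ either has compact support in the open domain $\mathcal{F}_0$ or is \emph{normal} to $\partial\mathcal{S}_0$, equivalently when $\Psi$ is \emph{tangent}. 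But a test function $\Psi\in\mathcal{H}_T$ is a constant vector $\ell_\Psi$ on $\mathcal{S}_0$ and is in general \emph{not} tangent to the unit circle, so $H_{\Psi^\perp}$ can blow up as both arguments approach $\partial\mathcal{S}_0$. This is exactly what makes the moving-body problem harder than the fixed-obstacle case of \cite{LNX06}, where test functions vanish on the boundary. The paper closes this gap with Lemma \ref{dualkato}: it constructs a "slowly varying lift" $\tilde\Psi^\eps$, a compactly supported divergence-free field equal to $\ell_\Psi$ on $\partial\mathcal{S}_0$ with $\|\nabla\tilde\Psi^\eps\|_{L^\infty}\to 0$, so that $\check\Psi^\eps := \Psi-\tilde\Psi^\eps$ is tangent to the boundary (and Proposition \ref{propestK} applies to $(\check\Psi^\eps)^\perp$), while the contribution of $\tilde\Psi^\eps$ to the nonlinear term is $O(\|\nabla\tilde\Psi^\eps\|_{L^\infty})$ by the energy bound. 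Without this device, or an explicit alternative, the step where you apply the bounded-kernel argument to the full $\Psi$ is not justified.

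Second, you identify the dangerous concentration loci as the diagonal and the horizontal symmetry axis, arguing the NMS sign rules them out. The NMS sign assumption does indeed handle the diagonal in the interior and the axis, but the locus that actually requires extra work is the body boundary $\partial\mathcal{S}_0$, and the sign condition alone does not control concentration there. The paper handles this via a quantitative no-concentration estimate near $\partial\mathcal{S}_0$, namely a uniform bound $\int_0^T\int_K |u^n\cdot\mathbf{n}|^2\,ds\le C$ on compact pieces of $\Gamma_+$ obtained by testing the vorticity transport against a smoothed $\arctan(x_1)$ (the boundary-coupled estimate from \cite{LNX06}), which yields the modulus of continuity \eqref{identity2y}. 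Your proposal does not describe any mechanism by which boundary concentration is precluded; invoking the reflection symmetry is not enough. Relatedly, to pass to the limit in $\omega^n\otimes\omega^n$ you also need a time-compactness step (the paper's Lemma \ref{tensor}, an Aubin--Lions argument giving $\omega^n\to\omega$ in $C([0,T];\mathcal{BM}(\mathcal{F}_0)-w^*)$); mere weak-$\star$ convergence in $L^\infty(0,T;\mathcal{BM})$ does not suffice for the product.
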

Let us slightly precise the last assertion. 
Actually the proof will provide a bound of  $\| \ell'  \|_{L^{\infty} (0,T) } $ which only depends on the body mass $m$ and on the initial energy $ \|   \overline{v}_0 \|_\rho$, but not on $T$.

Let us also stress that it is straightforward, by an energy estimate, to prove that the weak solution above enjoys a weak-strong uniqueness property. 
Then, applying Th. 1 of \cite{laure}, it follows that uniqueness holds for a $G_\delta$ dense subset of $\mathcal{H}$ endowed with its weak topology.

The rest of the paper is devoted to the proof of Theorem \ref{DelortBody}.

 \section{Proof}

A general strategy for obtaining a weak solution is to smooth out the initial data so that one gets a sequence of  initial data which launch some classical solutions, and then to pass 
to the limit with respect to the regularization parameter in the weak formulation of the equations.  

\subsection{A regularized sequence}
\label{wc}
Let $( \eta_n)_n$ be a sequence of even mollifiers. We therefore consider the sequence of regularized initial vorticities $( \omega_{0}^n )_n$ given by $ \omega_{0}^n := ( \omega_{0}) * \eta_n $
 and some corresponding initial velocities $(\overline{v}^{n}_{0})_n$ in $\mathcal{H}$ with
\begin{eqnarray*}
\overline{v}^{n}_{0} := \ell_0    \text{ in } \mathcal{S}_{0} \text{ and } \overline{v}^{n}_{0} := v^{n}_{0} := K[ \omega_{0}^n ] + \ell_{0,1} \nabla     \Phi_1  \text{ in } \mathcal{F}_{0}.
\end{eqnarray*}
Then   the $( \omega_{0}^n )_n$ are smooth, compactly supported in $\mathcal{F}_0$ (at least for $n$ large enough), NMS and bounded in $ L^1(\mathcal{F}_{0})$, and $(\overline{v}^{n}_{0})_n$ converges weakly in $\mathcal{H}$ to $ v_{0}$.

Let $( \overline{v}^n )_{n}$  in $C ([0,T]; \mathcal{H})$ be the classical solutions of \eqref{Euler11}--\eqref{Solide1ci} in $[0,T]$ respectively associated to the sequence  $(\overline{v}^n_0 )_{n}$ of initial data (cf. \cite{ort2}).
According to Lemma \ref{decopsmooth} the restriction $v^n$ of $\overline{v}^n$ to $\mathcal{F}_{0}$ splits into 
\begin{equation}
\label{vdecompo}
v^n = u^{n} +   \nabla \Phi^{n} \text{ where } u^{n} := K [\omega^n ]    \text{ and } \Phi^{n} :=  \ell^n_1 \Phi_1   .
\end{equation}
Observe in particular that from now on we denote $\ell^{n}$ for $\ell_{v^n} $.

Moreover these solutions preserve, for any $t$  in $[0,T]$, the mirror symmetry (this follows from the uniqueness of the Cauchy problem for classical solutions), the  kinetic energy: 
\begin{eqnarray}
\label{conservvelo}
\|  \overline{v}^n (t,\cdot) \|_{\rho} = \|   \overline{v}^n_0 \|_{\rho} ,
\end{eqnarray}
and  the $L^1$ norm of the vorticity on the upper and lower half-planes:
\begin{eqnarray}
\label{conservvorty}
\|  \omega^n  (t,\cdot) \|_{L^{1} (\mathcal{F}_{0,\pm})} = \|  \omega^n_0 \|_{L^{1} (\mathcal{F}_{0,\pm})} .
\end{eqnarray}
This last property can be obtained from the vorticity  equation:
\begin{eqnarray}
\label{eqvorty}
\partial_t \omega^n  + (v^n - \ell^{n} ) \cdot \nabla \omega^n  = 0 .
\end{eqnarray}
As already said before a classical solution is a fortiori a weak solution, 
thus for any test function $\Psi$ in $ \mathcal{H}_{T} $, 
\begin{equation}
	(  \Psi(T,\cdot ) , \overline{v}^n (T,\cdot) )_\rho
	- (  \Psi (0,\cdot ) , \overline{v}^n_0 )_\rho
	= \int_0^T (\frac{\partial \Psi}{\partial t},\overline{v}^n )_\rho \ dt
	+\int_0^T \int_{\mathcal{F}_0} v^n \cdot\left(\left(v^n-\ell^{n} \right)\cdot\nabla\right)\Psi \ dx\ dt .
	\label{dws29}
\end{equation}
Using the bounds 
\eqref{conservvelo} and \eqref{conservvorty}, we obtain that there exists a subsequence $( \overline{v}^{n_{k}} )_{k}$ of $(\overline{v}^n  )$ which converges to $ \overline{v}$ in $L^{\infty} ((0,T); \mathcal{H} )$ weak* and such that $( \omega^{n_{k}} )_{k}$ converges to $ \omega$ weak* in $L^{\infty} ((0,T);\mathcal{BM} (\overline{\mathcal{F}_0} ))$. In particular we have 
that $( \ell^{n_{k}} )_{k}$  converges to $\ell$ in $L^{\infty} (0,T)$  weak* and  that $( v^{n_{k}} )_{k}$  converges to $v$ weak* in $L^{\infty} ((0,T); L^{2}  (\mathcal{F}_{0}) )$ weak*, where $\ell$ and $v$  denote respectively the restrictions to $\mathcal{S}_{0}$ and $\mathcal{F}_{0}$ of $ \overline{v}$, and are  mirror symmetric, so that the vector  $\ell$ is of the form $(\ell_1 ,0)$. We also have that  $\om$  is NMS. 
In particular $\om$ has a vanishing total mass, that is  $ \omega (t, \cdot) (\mathcal{F}_{0})=0$ for almost every $t\in (0,T)$.
One should wonder whether or not the oddness holds in $\overline{\mathcal{F}_{0}}$ as well, that is if  \eqref{distinguo} also holds true for  $\phi \in C_c (\overline{\mathcal{F}_{0}})$. Actually we will see later that, for almost every time,  the measure  $\om$ of the boundary  vanishes, what implies a positive answer.

Our goal now is  to prove that the limit obtained  satisfies the weak formulation \eqref{dws2}.
Unfortunately, the weak convergences above are far from being sufficient to pass to the limit. 
We will first improve these convergences with respect to the time variable. 
More precisely in the next section we will give an estimate of the body acceleration which will allow to obtain strong convergence in $C( [0,T ]) $ of a subsequence of the solid velocities.
Then we will give  an estimate of the time derivative of the vorticity  which will allow to obtain  strong convergence  in $C( [0,T ] ; \mathcal{BM}(\mathcal{F}_{0} ) - w^* )$ of a subsequence of the vorticities.
Finally  we will pass to the limit thanks to an argument of no-concentration of the vorticity, up to the boundary. 

 \subsection{Estimate of the body acceleration}

The goal of this section is to prove the following.
\begin{Lemma}
\label{Addedm}
The sequence $ (( \ell^n ) ' )_{n}$ is bounded in $L^{\infty}(0,T)$.
\end{Lemma}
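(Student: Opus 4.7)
The plan is to exploit the Kirchhoff potentials to convert the Newton equation into an \emph{algebraic} relation for $(\ell^n)'$, in which the right-hand side is a quadratic expression in $v^n$ and $\ell^n$ that does not involve time derivatives. Once this "added mass" identity is established, the uniform bound will follow from the conservation of energy \eqref{conservvelo}.

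Concretely, I multiply the Euler equation \eqref{Euler11} (satisfied classically by $v^n$) by $\nabla \Phi_i$ and integrate over $\mathcal{F}_0$. Three terms arise:
\begin{itemize}
\item \emph{Pressure term.} Using $\Delta \Phi_i = 0$ and appropriate decay at infinity,
$\int_{\mathcal{F}_0} \nabla q^n \cdot \nabla \Phi_i\, dx = \int_{\partial \mathcal{S}_0} q^n \mathbf{n}_i\, ds = m (\ell^n_i)'$
by the Newton equation \eqref{Solide11}.
\item \emph{Time derivative term.} Integration by parts combined with $\div v^n = 0$ and the boundary condition $v^n \cdot \mathbf{n} = \ell^n \cdot \mathbf{n}$ gives
$\int_{\mathcal{F}_0} \partial_t v^n \cdot \nabla \Phi_i\, dx = \int_{\partial \mathcal{S}_0} \Phi_i \, (\ell^n)' \cdot \mathbf{n}\, ds = \sum_j M_{ij}(\ell^n_j)'$,
where $M_{ij} := \int_{\partial \mathcal{S}_0} \Phi_i \mathbf{n}_j \, ds = \int_{\mathcal{F}_0} \nabla \Phi_i \cdot \nabla \Phi_j\, dx$ is the symmetric positive-definite added-mass matrix.
\item \emph{Convective term.} Writing $((v^n - \ell^n) \cdot \nabla) v^n = \partial_k((v^n-\ell^n)_k v^n_j)$ thanks to $\div (v^n - \ell^n) = 0$, integrating by parts once more and using that $(v^n - \ell^n) \cdot \mathbf{n} = 0$ on $\partial \mathcal{S}_0$, we obtain
$\int_{\mathcal{F}_0} ((v^n - \ell^n) \cdot \nabla) v^n \cdot \nabla \Phi_i\, dx = -\int_{\mathcal{F}_0} (v^n - \ell^n)_k v^n_j\, \partial_k \partial_j \Phi_i\, dx$.
\end{itemize}
Summing, one obtains the identity
$(m\,\Id + M)\,(\ell^n)' = F^n$, with $F^n_i := \int_{\mathcal{F}_0} (v^n - \ell^n)_k v^n_j\, \partial_k \partial_j \Phi_i\, dx$.

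The bound then follows immediately: since $\Phi_i(x) = -x_i/|x|^2$ has its singularity at the origin (interior to $\mathcal{S}_0$) and $|D^2 \Phi_i(x)| \lesssim |x|^{-3}$ at infinity, $D^2 \Phi_i \in L^\infty(\mathcal{F}_0) \cap L^1(\mathcal{F}_0)$. A crude estimate gives
$|F^n_i| \leq \|D^2 \Phi_i\|_{L^\infty} \|v^n\|_{L^2}^2 + |\ell^n|\,\|D^2 \Phi_i\|_{L^2}\,\|v^n\|_{L^2}$.
By the conservation of kinetic energy \eqref{conservvelo}, the quantities $\|v^n\|_{L^2(\mathcal{F}_0)}$ and $|\ell^n|$ are controlled by $\|\overline{v}_0^n\|_\rho$, which is bounded uniformly in $n$. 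Since $m\,\Id + M$ is invertible, we conclude that $\|(\ell^n)'\|_{L^\infty(0,T)} \leq C$ uniformly in $n$, with $C$ depending only on $m$ and the initial energy (and independent of $T$, as announced after the statement of Theorem \ref{DelortBody}).

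The main technical point is to justify the integrations by parts at infinity. This is precisely where the mirror-symmetry assumption pays off: $\omega^n$ has zero total mass and the circulation of $v^n$ around $\partial \mathcal{S}_0$ vanishes, so by Lemma \ref{decopsmooth} and formula \eqref{infiny} the field $v^n = K_{\mathcal{H}}[\omega^n] + \ell^n_1 \nabla \Phi_1$ decays like $|x|^{-2}$; combined with the $|x|^{-2}$ decay of $\nabla \Phi_i$, the $|x|^{-3}$ decay of $D^2 \Phi_i$, and the fact that $q^n$ may be normalized so that $q^n \to 0$ at infinity, every surface integral over $\partial B_R$ vanishes as $R \to \infty$.
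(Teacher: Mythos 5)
Your proof is correct and takes essentially the same approach as the paper: both test the equation against the Kirchhoff potentials $\nabla\Phi_i$ (equivalently, the field $\Psi\in\mathcal{H}$ equal to $\ell$ on $\mathcal{S}_0$ and to $\nabla(\ell_1\Phi_1+\ell_2\Phi_2)$ on $\mathcal{F}_0$), identify the added-mass matrix acting on $(\ell^n)'$, and bound the remaining quadratic term by $D^2\Phi_i\in L^2\cap L^\infty$ together with the energy conservation \eqref{conservvelo}. The only difference is presentational: the paper starts directly from the weak identity \eqref{Tempobody}, which already packages the pressure contribution into $(\partial_t\overline{v}^n,\Psi)_\rho$, whereas you work from the strong Euler equation and account for the pressure term explicitly via Newton's law.
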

\begin{proof}
Let $\ell$ be in $\R^{2}$. Then we define  $\Psi$ in $ \mathcal{H} $ by setting
$\Psi = \ell$ in $\mathcal{S}_{0}$ and $\Psi  = \nabla ( \ell_{1}\Phi_{1} + \ell_{2}\Phi_{2} )$ in $\mathcal{F}_{0}$.
Therefore, $ \overline{v}^{n}$ being a classical solution of the system \eqref{Euler11}--\eqref{Solide1ci}, one has 
\begin{equation}
\label{Tempobody}
	 (\partial_{t} \overline{v}^n , \Psi  )_\rho 
	 = \int_{\mathcal{F}_0} v^n \cdot\left(\left(v^n-\ell^n \right)\cdot\nabla\right)\Psi \ dx .
\end{equation}

By using the definition of the scalar product in 
\eqref{scalarp}, 
 \eqref{t1.3sec} and the boundary condition  \eqref{Euler13}
 we obtain 
\begin{eqnarray*}
	 (\partial_{t} \overline{v}^n , \Psi  )_\rho 
	 &=&\ell^{T} \mathcal{M} ( \ell^n ) ' ,
\end{eqnarray*}
with
$$  \mathcal{M} := m Id_{2} + (  \int_{\mathcal{F}_0}   \nabla \Phi_{i} \cdot   \nabla  \Phi_{j} dx )_{i,j} ,$$
which is a $2\times 2$ positive definite symmetric matrix that stands for the added mass of the body  which, loosely speaking,  measures how much the  surrounding fluid resists the acceleration as the body moves through it.

Now we use that $\nabla \Psi $ is in $L^{2} (\mathcal{F}_0 ) \cap L^{\infty} (\mathcal{F}_0 )$ and \eqref{conservvelo} to get that the right hand side of \eqref{Tempobody} is bounded 
uniformly in $n$. 
Therefore $ ( \ell^n ) ' $ is bounded in $L^{\infty}(0,T)$.
\end{proof}
In particular we deduce from this, \eqref{conservvelo} and Ascoli's theorem  that there exists a subsequence, that we still denote $( \ell^{n_{k}} )_{k}$, which   converges  strongly to $\ell$ in $C([0,T ])$.
Moreover by weak compactness, we also have  that 
 $( (\ell^{n_{k}} )' )_{k}$  converges to $\ell'$ in $L^{\infty} (0,T)$  weak*.

 \subsection{A decomposition of the nonlinearity}
The main difficult term  to pass to the limit into \eqref{dws29} is the third one because of its nonlinear feature.
 We first use \eqref{vdecompo} to obtain for any test function $\Psi$ in $\mathcal{H}_{T} $, 
\begin{eqnarray*}
\int_0^T \int_{\mathcal{F}_0} v^n \cdot\left(\left(v^n-\ell^n \right)\cdot\nabla\right)\Psi \ dx\ dt &=&
  T^n_{1} + T^n_{2} + T^n_{3}  \text{ where } 
\\  T^n_{1}&:=&  \int_0^T \int_{\mathcal{F}_0} u^n \cdot \left(\left(u^n \right)\cdot\nabla\right)\Psi \ dx \ dt , 
\\   T^n_{2}&:=& \int_0^T  \int_{\mathcal{F}_0} u^n \cdot \left(\left( \nabla  \Phi^{n}  - \ell^n \right)\cdot\nabla\right)\Psi \ dx \ dt ,
\\  T^n_{3}&:=& \int_0^T  \int_{\mathcal{F}_0}  \nabla  \Phi^{n} \cdot \left(\left( u^n + \nabla  \Phi^{n}  - \ell^n \right)\cdot\nabla\right)\Psi \ dx \ dt .
\end{eqnarray*}
From what precedes we infer that 
$(  T^{n_{k}}_{2} )_k$ and $(  T^{n_{k}}_{3} )_k$ converge respectively to $  T_{2}$ and  $  T_{3}$, where
\begin{eqnarray*}
 T_{2}:= \int_0^T  \int_{\mathcal{F}_0} u \cdot \left(\left( \nabla  \Phi  - \ell \right)\cdot\nabla\right)\Psi \ dx  \ dt,
\quad  T_{3}:= \int_0^T  \int_{\mathcal{F}_0}  \nabla  \Phi \cdot \left(\left( u + \nabla  \Phi  - \ell \right)\cdot\nabla\right)\Psi \ dx \ dt ,
\end{eqnarray*}
where $ \Phi :=  \ell_{1} \Phi_{1} $.

	The term $T^n_{1}$ is more complicated.  
	We would like to use vorticity to deal with this term, as in Delort's method where ruling out vorticity concentrations (formation of Dirac masses) allows to deal with the nonlinearity.
	However there is a difference here: the test function $\Psi$ involved  in the term $T^n_{1}$ is not vanishing in general in the neighborhood of the boundary $\partial  \mathcal{F}_0$. 
	We will use several arguments to fill this gap. 
	In the next section we point out the role played by the  normal trace of  test functions. 
	
 \subsection{Introduction of the vorticity in the nonlinearity}
Let us  start with the following lemma.
\begin{Lemma}
\label{Never00}
Let $ \omega $ be smooth compactly supported  in $\mathcal{F}_{0}$ such that   $u := K[ \omega ]  $ is   in $ L^2 (\mathcal{F}_0 )$.
 Then, for  $ \Psi \in C^1_c ( \overline{\mathcal{F}_{0}})$ divergence free,
\begin{eqnarray}
\label{Never00Eq2}
\int_{\mathcal{F}_{0}} u \cdot ( u \cdot \nabla \Psi ) dx 
= 
- \frac{1}{2}  \int_{ \partial \mathcal{S}_0} | u  \cdot  \mathbf{n}^{\perp} |^{2} \Psi  \cdot \mathbf{n} 
+ 
\int_{\mathcal{F}_{0}}  \omega u\cdot  \Psi^{\perp}  .
 \end{eqnarray}
Assume in addition that  $ \omega $  is NMS, then, also for  $ \Psi \in C^1_c ( \overline{\mathcal{F}_{0}})$ divergence free,
\begin{eqnarray}
\label{Never00Eq}
\int_{\mathcal{F}_{0}^\pm} u \cdot ( u \cdot \nabla \Psi ) dx 
= 
- \frac{1}{2}  \int_{\Gamma_\pm} | u  \cdot  \mathbf{n}^{\perp} |^{2} \Psi  \cdot \mathbf{n} 
+ 
\int_{\mathcal{F}_{0}^\pm}  \omega u\cdot  \Psi^{\perp}  .
 \end{eqnarray}
\end{Lemma}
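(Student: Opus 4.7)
The plan is to prove \eqref{Never00Eq2} by a two-step integration by parts based on the classical planar identity
$$(u\cdot\nabla) u = \omega\, u^\perp + \nabla\frac{|u|^2}{2},$$
valid for any smooth divergence-free vector field in $\R^2$. Both sides of \eqref{Never00Eq2} are well defined since $u\in L^2(\mathcal{F}_0)$, $\omega \in C^\infty_c(\mathcal{F}_0)$ and $\Psi\in C^1_c(\overline{\mathcal{F}_0})$, so there is no issue at infinity.

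To establish \eqref{Never00Eq2} I would first write, using $\div u = 0$, the algebraic identity $u_i u_j \partial_j \Psi_i = \partial_j(u_i u_j \Psi_i) - u_j (\partial_j u_i)\Psi_i$ and apply the divergence theorem on $\mathcal{F}_0$. The resulting boundary term equals $\int_{\partial\mathcal S_0}(u\cdot\mathbf n)(u\cdot\Psi)\,ds$, and it vanishes because the image-method structure of the kernel $K(x,y)$ forces $u=K[\omega]$ to satisfy $u\cdot\mathbf{n}=0$ on $\partial\mathcal{S}_0$. The remaining volume term $-\int_{\mathcal{F}_0}(u\cdot\nabla)u\cdot\Psi\,dx$ is then rewritten via the identity above. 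The rotational part gives $-\int \omega u^\perp\cdot\Psi\,dx = \int \omega u\cdot\Psi^\perp\,dx$, using $u^\perp\cdot\Psi = -u\cdot\Psi^\perp$, which is exactly the desired vorticity term. The gradient part $-\int\Psi\cdot\nabla(|u|^2/2)\,dx$ is integrated by parts once more, using $\div\Psi = 0$ in $\mathcal{F}_0$, producing only the boundary contribution $-\tfrac12\int_{\partial\mathcal{S}_0}|u|^2\Psi\cdot\mathbf n\, ds$. Since $u\cdot\mathbf{n}=0$ on $\partial\mathcal{S}_0$, the velocity is purely tangential there, so $|u|^2 = |u\cdot\mathbf{n}^\perp|^2$ on the boundary, which yields exactly \eqref{Never00Eq2}.

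To obtain \eqref{Never00Eq} I would rerun the same computation on the half-domains $\mathcal{F}_{0,\pm}$. The boundary $\Gamma_\pm$ decomposes into a portion of $\partial\mathcal S_0$, where the previous argument applies verbatim, and a portion of the symmetry axis $\{x_2 = 0\}\cap\mathcal{F}_0$. The new input required is that NMS forces $\omega$ to be odd with respect to $x_2$, and the explicit expression of $K(x,y)$ then shows that $u_2$ is also odd in $x_2$; in particular $u_2$ vanishes identically on the horizontal axis. Since the outward unit normal to $\mathcal{F}_{0,\pm}$ along this segment is $(0,\mp 1)$, the normal component $u\cdot\mathbf n$ still vanishes on the axis portion of $\Gamma_\pm$. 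Consequently the cross boundary term $\int_{\Gamma_\pm}(u\cdot\mathbf n)(u\cdot\Psi)\,ds$ vanishes on the whole of $\Gamma_\pm$, and the tangentiality identity $|u|^2 = |u\cdot\mathbf n^\perp|^2$ continues to hold everywhere on $\Gamma_\pm$, delivering \eqref{Never00Eq}.

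No genuine difficulty is anticipated: the argument is a direct two-step integration by parts combined with a standard vector identity. The only point that requires a moment's care is verifying that the normal component of $u$ vanishes on every connected piece of the boundary being integrated against, which in the half-plane version reduces to checking the oddness of $u_2$ across the symmetry axis enforced by NMS.
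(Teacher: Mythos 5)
Your proof is correct and follows essentially the same route as the paper: integration by parts, the Lamb/vorticity decomposition of the nonlinearity, and the tangency of $u=K[\omega]$ to $\partial\mathcal{S}_0$ (and, under NMS, to the symmetry axis), with $|u|^2=|u\cdot\mathbf{n}^\perp|^2$ on the boundary. The paper merely packages the algebra into the single pointwise identity $u\cdot(u\cdot\nabla\Psi)=u^\perp\cdot\nabla(\Psi^\perp\cdot u)+\Psi\cdot\nabla(\tfrac12|u|^2)$ and integrates each term once, whereas you move the derivative onto $u$ first and then apply $(u\cdot\nabla)u=\omega u^\perp+\nabla\tfrac{|u|^2}{2}$; the two are equivalent.
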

\begin{proof}
Let us focus on the proof of \eqref{Never00Eq}; the proof of \eqref{Never00Eq2} being similar.
First we observe that $u$ is smooth, divergence free,  in $ L^2 (\mathcal{F}_0 )$ and is tangent to $\Gamma_\pm$ (since   $ \omega $  is NMS).
Now, using that $u$ and $\Psi$ are divergence free, we obtain 
\begin{eqnarray}
\label{craz}
u \cdot ( u \cdot \nabla \Psi ) = u^\perp \cdot \nabla (\Psi^\perp \cdot u ) +  \Psi \cdot \nabla (  \frac{1}{2} | u |^{2} ) .
\end{eqnarray}
Therefore integrating by parts, using that $u$ is tangent to $\Gamma_\pm$, that $\div u^\perp = -  \omega $ and that $ \Psi $  is  divergence free, we get the desired result.

\end{proof}
Let us  first recall what happens when $\Psi$ is in $C^1_c (\mathcal{F}_0 )$. This will already provide some useful informations in the next section. 
\begin{Lemma}
\label{Never0}
Let $ \omega $ in $\mathcal{BM} (\mathcal{F}_{0})$, diffuse (that is $\omega (\{ x\} ) =0$ for any $x \in \mathcal{F}_{0}$), with vanishing total mass, such that  $u := K[ \omega  ]  \in L^2 (\mathcal{F}_0 )$.
Let $ \Psi \in C^1_c (\mathcal{F}_0 )$ divergence free.
Then 
\begin{eqnarray}
\label{nonli}
\int_{\mathcal{F}_{0}} u \cdot ( u \cdot \nabla \Psi ) dx 
= - \frac{1}{2} \iint_{\mathcal{F}_{0} \times \mathcal{F}_{0} } H_{\Psi^{\perp}}(x,y) 
 \ d \omega (x) d \omega (y)  ,
\end{eqnarray}
where
\begin{eqnarray*}
H_f (x,y) := f(x)\cdot K_\mathcal{H} (x,y) + f(y) \cdot K_\mathcal{H} (y,x) .
\end{eqnarray*}
\end{Lemma}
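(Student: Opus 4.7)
The strategy is to approximate $\omega$ by smooth vorticities, apply the integrated-by-parts identity of Lemma \ref{Never00}, symmetrize the resulting double integral, and pass to the limit via a Delort--Schochet argument that exploits the diffuseness hypothesis.

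First, I would pick a sequence $\omega_n \in C^\infty_c(\mathcal{F}_0)$ with $\int_{\mathcal{F}_0} \omega_n\, dx = 0$, converging to $\omega$ weak-$*$ in $\mathcal{BM}(\mathcal{F}_0)$, and such that $u_n := K[\omega_n]$ converges to $u$ strongly in $L^2_{\mathrm{loc}}(\mathcal{F}_0)$. Such a sequence can be built by combining a smooth cutoff with a mollification, plus a small $L^1$-bounded correction restoring zero mean. The vanishing total mass of $\omega_n$ ensures $K[\omega_n] = K_\mathcal{H}[\omega_n]$. Since $\omega_n$ is smooth and $\Psi$ is compactly supported in the open set $\mathcal{F}_0$, the boundary integral in \eqref{Never00Eq2} drops out, leaving
$$\int_{\mathcal{F}_0} u_n \cdot (u_n \cdot \nabla \Psi)\, dx = \int_{\mathcal{F}_0} \omega_n\, u_n \cdot \Psi^\perp\, dx.$$
Substituting $u_n(x) = \int K_\mathcal{H}(x,y)\, \omega_n(y)\, dy$ on the right and symmetrizing the resulting double integral under the swap $(x,y)\leftrightarrow(y,x)$ reconstructs (up to the factor $\tfrac12$) the kernel $H_{\Psi^\perp}(x,y)$ from its definition, producing the identity of the statement for each $\omega_n$.

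Passing to the limit on the left is routine from the strong $L^2_{\mathrm{loc}}$ convergence of $u_n$ and the compact support of $\nabla \Psi$. The \emph{main obstacle} is the limit of the double integral on the right, because $K_\mathcal{H}$ inherits a $|x-y|^{-1}$ singularity on the diagonal. The crucial observation is that in the symmetrized combination this singularity cancels: one has $\Psi^\perp(x)\cdot H(x-y) + \Psi^\perp(y)\cdot H(y-x) = (\Psi^\perp(x)-\Psi^\perp(y))\cdot H(x-y)$, bounded by the Lipschitz regularity of $\Psi^\perp$, while the image piece $H(x-y^*)$ and the hydrodynamic correction $H(x)$ are smooth on $\mathrm{supp}(\Psi)\times\overline{\mathcal{F}_0}$. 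Hence $H_{\Psi^\perp}$ extends to a bounded function on $\mathcal{F}_0\times\mathcal{F}_0$, continuous off the diagonal. Since $\omega$ is diffuse, the product measure $\omega\otimes\omega$ does not charge the diagonal and the mass that $\omega_n\otimes\omega_n$ places in a thin tube $\{|x-y|<\varepsilon\}$ is uniformly small in $n$ as $\varepsilon\to 0$; a standard two-step cut-off argument (truncate the kernel away from the diagonal, send $n\to\infty$ for fixed $\varepsilon$, then $\varepsilon\to 0$) then yields the convergence and concludes the proof.
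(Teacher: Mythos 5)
Your proof is correct and follows essentially the same route as the paper: regularize $\omega$ by smooth compactly supported zero-mean vorticities, apply Lemma \ref{Never00} (where the boundary term vanishes since $\Psi$ is compactly supported in the open set), symmetrize the Biot--Savart kernel to produce $H_{\Psi^\perp}$, and pass to the limit via the diffuseness of $\omega$. The only cosmetic difference is that the paper delegates the boundedness of $H_{\Psi^\perp}$ to Proposition \ref{propestK0} (citing Schochet) and the diagonal/cut-off passage to Lemma \ref{gerard}, whereas you re-derive the diagonal cancellation $(\Psi^\perp(x)-\Psi^\perp(y))\cdot H(x-y)$ and the two-step truncation argument directly; be careful in your regularization to keep $|\omega_n|$ dominated by a sequence of positive measures converging weak-$*$ to the diffuse $|\omega|$ (e.g. $|\omega|*\eta_n$ plus a vanishing smooth correction), as this is what makes the tube estimate uniform in $n$.
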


When  $ \omega $ is smooth, the previous lemma follows from Lemma \ref{Never00}: it suffices to plug the definition of the Biot-Savart operator in the second term of the right hand side of \eqref{Never00Eq2} and to symmetrize.
The gain of this symmetrization is that the auxiliary function $H_f (x,y) $ is bounded, whereas the Biot-Savart kernels $K(x,y)$ and $ K_\mathcal{H} (x,y)$ are not.
More precisely it also follows from the analysis in  \cite{Schochet95} that:
\begin{Proposition}\label{propestK0}
There exists a constant $M_2$ depending only on $\mathcal{F}_{0}$ such that
\begin{equation}\label{estK}
| H_f (x,y)  |\leq M_2\|f\|_{W^{1,\infty} ( \mathcal{F}_{0})}
\quad\forall x,y\in\mathcal{F}_{0},\ x\neq y.  
\end{equation}
for any $f\in C^{1}_c ({ \mathcal{F}_{0}};\R^2)$.
\end{Proposition}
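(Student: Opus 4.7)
The plan is to perform a Schochet-type symmetrization directly, carried out cleanly in complex coordinates. Identifying $\mathbb{R}^2\cong\mathbb{C}$ with $v^\perp\leftrightarrow iv$, one has $H(z)=i/(2\pi\bar z)$, and since $y^*=1/\bar y$,
\[
K_\mathcal{H}(x,y)=\frac{i}{2\pi}\Bigl[\frac{1}{\overline{x-y}}-\frac{1}{\overline{x-y^*}}+\frac{1}{\bar x}\Bigr].
\]
Using $u\cdot v=\mathrm{Re}(\bar u\, v)$ for the real inner product, a direct expansion gives $H_f(x,y)=\mathrm{Im}(I)/(2\pi)$ with
\[
I:=\frac{f(x)-f(y)}{x-y}+\Bigl(\frac{f(x)}{x}-\frac{f(x)}{x-y^*}\Bigr)+\Bigl(\frac{f(y)}{y}-\frac{f(y)}{y-x^*}\Bigr).
\]
The first term realizes the standard Schochet cancellation at the diagonal $x=y$, and is bounded by $\|\nabla f\|_\infty$ via the Lipschitz estimate on $f$.

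For the two parenthetical terms, set $\zeta:=x\bar y-1$, so that $\bar\zeta=\bar xy-1$. A short calculation using $y^*=1/\bar y$ gives $\bar y\cdot x(x-y^*)=x\zeta$ and symmetrically $\bar x\cdot y(y-x^*)=y\bar\zeta$, whence
\[
\frac{f(x)}{x}-\frac{f(x)}{x-y^*}=-\frac{f(x)}{x\,\zeta},\qquad \frac{f(y)}{y}-\frac{f(y)}{y-x^*}=-\frac{f(y)}{y\,\bar\zeta}.
\]
The quantitative key is the identity
\[
|\zeta|^2=|x|^2|y|^2-2\,x\cdot y+1=(|x|^2-1)(|y|^2-1)+|x-y|^2,
\]
obtained by direct expansion, which shows that $|\zeta|$ vanishes only when $x=y\in\partial\mathcal{S}_0$.

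The main obstacle is to absorb the potential singularity of $1/\zeta$, which lives precisely on the diagonal of the boundary. Here I would invoke that $f\in C^1_c(\mathcal{F}_0)$ forces $f$ to vanish on $\partial\mathcal{S}_0$; integrating $\nabla f$ along the radial segment joining $x/|x|\in\partial\mathcal{S}_0$ to $x$ gives the boundary-adapted bound $|f(x)|\leq\|\nabla f\|_\infty(|x|-1)$. Setting $A=|x|-1$, $B=|y|-1$, $C=|x-y|$, the identity above together with $|x|^2-1\geq 2A$ yields $|\zeta|^2\geq 4AB+C^2$, while the triangle inequality gives $|A-B|\leq C$; a short case split on $A\leq B$ versus $A>B$ (using $A\leq B+C$ in the latter to write $A^2\leq 2B^2+2C^2\leq 2(4AB+C^2)$) produces the uniform bound $A^2\leq 2|\zeta|^2$, i.e.\ $(|x|-1)/|\zeta|\leq\sqrt 2$. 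Hence $|f(x)/(x\zeta)|\leq\sqrt 2\,\|\nabla f\|_\infty$ and, symmetrically, $|f(y)/(y\bar\zeta)|\leq\sqrt 2\,\|\nabla f\|_\infty$, so $|I|\leq(1+2\sqrt 2)\|\nabla f\|_\infty$, which proves the proposition with $M_2:=(1+2\sqrt 2)/(2\pi)$.
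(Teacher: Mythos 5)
Your proof is correct, and it follows the same underlying strategy that the paper gestures at but does not carry out: symmetrization \`a la Schochet applied to the explicit Biot--Savart kernel for the exterior of the unit disk. The paper simply cites \cite{Schochet95} and remarks that the estimate ``can be proved thanks to the formula \eqref{BSexplicit}''; you actually execute that plan, and the complex-coordinate formulation is a genuine improvement in transparency. The identity
\[
|\zeta|^2=|x\bar y-1|^2=(|x|^2-1)(|y|^2-1)+|x-y|^2
\]
cleanly isolates the only possible singularity of the two non-diagonal pieces (on the boundary diagonal), and pairing it with the vanishing $|f(x)|\leq \|\nabla f\|_\infty(|x|-1)$ is exactly the mechanism that makes $C^1_c$ (rather than just $W^{1,\infty}$) matter. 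The elementary case split giving $(|x|-1)\leq\sqrt2\,|\zeta|$ is correct; I checked that in the case $A>B$ the chain $A^2\leq 2B^2+2C^2\leq 8AB+2C^2\leq 2|\zeta|^2$ holds, including the degenerate situation $B=0$.

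One small point you should spell out: the Lipschitz bound $|f(x)-f(y)|\leq\|\nabla f\|_\infty|x-y|$ is not automatic on the non-convex domain $\mathcal{F}_0$. Since $f\in C^1_c(\mathcal{F}_0)$ vanishes in a neighbourhood of $\partial\mathcal{S}_0$, extend it by zero to a $C^1$ function on all of $\R^2$ with the same $\|\nabla f\|_{L^\infty}$ and then integrate along straight segments in $\R^2$. The same observation also legitimizes the radial-integration bound $|f(x)|\leq\|\nabla f\|_\infty(|x|-1)$, though there the radial segment from $x/|x|$ to $x$ in fact stays in $\overline{\mathcal{F}_0}$, so it is already unproblematic. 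With that one-line remark added, the argument is complete and gives the explicit constant $M_2=(1+2\sqrt2)/(2\pi)$.
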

Proposition \ref{propestK0} is also true if one substitutes $K(x,y)$ to $ K_\mathcal{H} (x,y)$ in the definition of $H_f$ above.
However the choice of  $ K_\mathcal{H} (x,y)$ seems better since it implies the  extra property that for  any $f\in C^{1}_c ({ \mathcal{F}_{0}};\R^2)$, $H_f$ is tending to $0$ at infinity, thanks to \eqref{infiny}.

Using this, one infers that Lemma \ref{Never00} also holds true for any  diffuse measure by a regularization process. Let us refer again here to \cite{Schochet95} for more details, or to the sequel of this paper where we will slightly extend this.

 \subsection{Temporal estimate of the fluid}
 We have the following.
\begin{Lemma}
\label{tensor}
There exists a subsequence  $(\overline{v}^{n_{k}})_{k}$ of $(\overline{v}^{n})_{n}$ which converges to $\overline{v}$ in $C( [0,T ] ; \mathcal{H} - w )$, and such that $(\omega^{n_{k}})_{k}$ of $(\omega^n)_{n }$  converges to $\omega := \curl {v}$ in $C( [0,T ] ; \mathcal{BM}(\mathcal{F}_{0}) - w^* )$. 
\end{Lemma}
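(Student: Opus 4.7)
The plan is to upgrade the weak$^*$ convergences already extracted to strong-in-time convergence in the weak topologies of $\mathcal{H}$ and $\mathcal{BM}(\mathcal{F}_0)$, via Arzel\`a-Ascoli. Both target spaces are metrizable on bounded subsets, since $\mathcal{H}$ is a separable Hilbert space and $\mathcal{BM}(\mathcal{F}_0)$ is the dual of the separable space $C_0(\mathcal{F}_0)$. As the sequences $(\overline{v}^n)$ and $(\omega^n)$ are uniformly bounded in $L^\infty(0,T;\mathcal{H})$ and $L^\infty(0,T;\mathcal{BM}(\mathcal{F}_0))$ by \eqref{conservvelo} and \eqref{conservvorty}, it suffices to establish equicontinuity in time of $t\mapsto\int_{\mathcal{F}_0}\omega^n\phi\,dx$ and $t\mapsto(\overline{v}^n(t),\Psi)_\rho$, uniformly in $n$, for $\phi$ and $\Psi$ in countable dense families of test functions.

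For the vorticity, testing the transport equation \eqref{eqvorty} against $\phi\in C^\infty_c(\mathcal{F}_0)$ gives
\begin{equation*}
\dt\int_{\mathcal{F}_0}\omega^n\phi\,dx = \int_{\mathcal{F}_0}\omega^n(v^n-\ell^n)\cdot\nabla\phi\,dx.
\end{equation*}
Using the decomposition \eqref{vdecompo}, the contribution of $\nabla\Phi^n-\ell^n$ is bounded by $C\,|\ell^n_1|\,\|\omega^n\|_{\mathcal{BM}}\,\|\nabla\phi\|_{L^\infty}$, hence uniformly in $n$ and $t$. The nonlinear piece is handled by the symmetrization used in Lemma \ref{Never0}: writing $u^n=K[\omega^n]$ and symmetrizing,
\begin{equation*}
\int_{\mathcal{F}_0}\omega^n u^n\cdot\nabla\phi\,dx = \frac{1}{2}\iint_{\mathcal{F}_0\times\mathcal{F}_0}\bigl(K(x,y)\cdot\nabla\phi(x)+K(y,x)\cdot\nabla\phi(y)\bigr)\,d\omega^n(x)\,d\omega^n(y),
\end{equation*}
and the integrand is uniformly bounded by $C\|\phi\|_{W^{2,\infty}}$ by the variant of Proposition \ref{propestK0} with $K$ in place of $K_\mathcal{H}$ (the diagonal singularity of $K$ is absorbed by the Lipschitz cancellation $\nabla\phi(x)-\nabla\phi(y)$). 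This gives a uniform bound on $|\dt\int\omega^n\phi\,dx|$, and a diagonal extraction over a countable dense family in $C_0(\mathcal{F}_0)$ produces a subsequence, still denoted $(\omega^{n_k})$, converging in $C([0,T];\mathcal{BM}(\mathcal{F}_0)-w^*)$.

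For the velocity, I would take $\Psi\in\mathcal{H}$ with $\Psi|_{\overline{\mathcal{F}_0}}$ smooth and compactly supported, and view it as a time-independent element of $\mathcal{H}_T$. The weak formulation \eqref{dws29} then reduces to
\begin{equation*}
(\overline{v}^n(t),\Psi)_\rho - (\overline{v}^n(s),\Psi)_\rho = \int_s^t\int_{\mathcal{F}_0} v^n\cdot\bigl((v^n-\ell^n)\cdot\nabla\bigr)\Psi\,dx\,d\tau,
\end{equation*}
whose integrand is pointwise bounded by $(|v^n|^2+|\ell^n||v^n|)\|\nabla\Psi\|_{L^\infty}$. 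The compact support of $\nabla\Psi$, together with \eqref{conservvelo} and Lemma \ref{Addedm}, yields a Lipschitz bound $C(\Psi)|t-s|$ uniform in $n$. Selecting a countable family of such $\Psi$'s whose linear span is dense in $\mathcal{H}$, and using the uniform $L^\infty(0,T;\mathcal{H})$ bound to extend by density to all of $\mathcal{H}$, Arzel\`a-Ascoli (after a further diagonal extraction compatible with Lemma \ref{Addedm} and with the previous paragraph) delivers a subsequence $\overline{v}^{n_k}$ converging in $C([0,T];\mathcal{H}-w)$. The identification $\omega=\curl v$ then follows from the continuity of $\curl$ in the sense of distributions together with the weak-$^*$ convergence of $v^{n_k}$ to $v$ in $L^\infty(0,T;L^2(\mathcal{F}_0))$.

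The main technical point is the uniform boundedness of the symmetrized kernel, i.e.\ the classical Delort-Schochet cancellation; once this is in hand, the remainder is soft functional analysis combining Arzel\`a-Ascoli with the uniform a priori bounds already at our disposal.
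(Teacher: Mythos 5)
Your proposal is correct and follows essentially the same strategy as the paper: a uniform time-derivative bound combined with an Arzel\`a--Ascoli argument in the weak topologies, with the Delort--Schochet symmetrization (Proposition \ref{propestK0}) providing the uniform bound for the vortex-stretching term. The paper packages the compactness step as a small Aubin--Lions-type result (Lemma \ref{weakc}), but its proof in the appendix is exactly the Arzel\`a--Ascoli argument you sketch, and the paper's time-derivative bounds are obtained via the same decomposition \eqref{vdecompo} and the same passage to the symmetrized kernel $H_f$. The only cosmetic differences are that you treat $(\overline{v}^n(\cdot),\Psi)_\rho$ directly (so that the $\ell^n$ and $v^n$ parts of $\overline{v}^n$ are handled at once), while the paper bounds $\partial_t v^n$ against divergence-free test functions in $C^\infty_c(\mathcal{F}_0)$ and takes $\Psi=\nabla^\perp\phi$ to deduce the vorticity estimate, recovering the $\ell^n$ compactness separately from Lemma \ref{Addedm}; the two routes establish the same equicontinuity.
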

\begin{proof}

Let us consider a divergence free vector field  $\Psi$ in  $C^\infty_c ( {\mathcal{F}_{0}})$, so that
\begin{equation*}
	  \int_{\mathcal{F}_0} \Psi \cdot \partial_{t } {v}^n dx =  ( \Psi ,\partial_{t }\overline{v}^n )_\rho 	 =  {T}^n_{1} + T^{n}_{2} + T^{n}_{3} ,
\end{equation*}
where, thanks to  Lemma \ref{Never0},
\begin{equation*}
{T}^n_{1} := - \frac{1}{2} \iint_{\mathcal{F}_{0} \times \mathcal{F}_{0} } H_{\Psi^{\perp}}(x,y)   
 \ \omega^n (x) \omega^n (y)  dx dy .
\end{equation*}
We can infer from Proposition
\ref{propestK0} and \eqref{conservvelo} that 
\begin{equation*}
|    \int_{\mathcal{F}_0} \Psi  \cdot \partial_{t } {v}^n dx  |   \leqslant C \| \Psi \|_{H^{1} \cap W^{1,\infty}(\mathcal{F}_{0})} .
 \end{equation*}
Moreover using that for any $\phi \in C^\infty_c (  {\mathcal{F}_{0}})$ then  $\Psi = \nabla^\perp \phi $ is  a divergence free vector field   in  $C^\infty_c ( {\mathcal{F}_{0}})$, we get 
\begin{equation*}
|    \int_{\mathcal{F}_0} \phi  \cdot \partial_{t } \omega^n dx  | =  |    \int_{\mathcal{F}_0} \Psi  \cdot \partial_{t } {v}^n dx  |  \leqslant C \| \Psi \|_{H^{1} \cap W^{1,\infty}(\mathcal{F}_{0})} \leqslant C \| \phi \|_{H^{2} \cap W^{2,\infty}(\mathcal{F}_{0})} .
 \end{equation*}
%
It is therefore sufficient to  use the Sobolev embedding theorem and  the following version of the Aubin-Lions lemma with $M>2$ and with 
\begin{enumerate}
\item $X= L^2 (\mathcal{F}_{0 } )$, $Y= H^{M}_{0}(\mathcal{F}_{0 } ) $,  the completion of $C^\infty_c (  {\mathcal{F}_{0}})$ in the Sobolev space 
$H^{M}(\mathcal{F}_{0 } ) $,  and $f_n = {v}^n$; and with 
\item $X= C_{0} (\mathcal{F}_{0 } )$ and $Y =H^{M+1}_{0}(\mathcal{F}_{0 } )$ and $f_n = \omega^n$.
\end{enumerate}
\begin{Lemma}
\label{weakc}
Let $X$ and $Y$ be two separable Banach spaces such that $Y$ is dense in $X$. Assume that $(f_n )_n $ is a bounded sequence in  $L^{\infty}((0,T); X')$ such that  $(\partial_t f_n )_n $ is  bounded in $L^{\infty}((0,T); Y')$. Then $(f_n )_n $ is relatively compact in  $C( [0,T ]; X' - w^* )$.
\end{Lemma}
The proof of Lemma \ref{weakc} is given in appendix for sake of completeness.
\end{proof}
A first consequence of the previous result is that we can pass to the limit the left hand side of \eqref{dws29}:
for any test function $\Psi$ in $ \mathcal{H}_{T}$, as $k \rightarrow +\infty$, 
\begin{equation*}
	(  \Psi(T,\cdot ) , \overline{v}^{n_{k}} (T,\cdot) )_\rho
	- (  \Psi (0,\cdot ) , \overline{v}^{n_{k}}_0 )_\rho
	\rightarrow (  \Psi(T,\cdot ) , \overline{v} (T,\cdot) )_\rho
	- (  \Psi (0,\cdot ) , \overline{v}_0 )_\rho .
	\end{equation*}

\subsection{A slowly varying lift }
Let us go back to the issue of passing to the limit the equation \eqref{dws29}  for a general test function $\Psi$ in $ \mathcal{H}_{T}$.
The only remaining issue is to pass to the limit into the term involving   $T^n_{1}$.
We are going to use the following generalizations of Lemma 
\ref{Never0} and  Proposition \ref{propestK0}.  
We will denote $C^1_{c,\sigma} (\overline{\mathcal{F}_0} )$  the subspace of the functions in $C^1_c (\overline{\mathcal{F}_0} , \R^{2})$ which are divergence free and tangent to the boundary $\partial \mathcal{F}_0$.
\begin{Proposition}\label{propestK}
There exists a constant $M_2$ depending only on $\mathcal{F}_{0}$ such that \eqref{estK}  holds true
for any $f\in C^{1}_c (\overline{ \mathcal{F}_{0}};\R^2)$  normal to the boundary.
\end{Proposition}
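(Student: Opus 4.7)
My plan is to reduce the estimate to Proposition \ref{propestK0} by splitting $f$ into a purely-normal lift of its boundary values and a remainder vanishing on $\partial\mathcal{S}_0$. First, the identity $K_{\mathcal H}(x,y)=H(x-y)-H(x-y^*)+H(x)$ together with $H(-z)=-H(z)$ yields
\begin{equation*}
H_f(x,y) = [f(y)-f(x)]\cdot H(x-y) + [f(x)\cdot H(x)+f(y)\cdot H(y)] - T(x,y),
\end{equation*}
where $T(x,y):=f(x)\cdot H(x-y^*)+f(y)\cdot H(y-x^*)$. The first bracket is bounded by $\|\nabla f\|_\infty/(2\pi)$ since $|H(z)|=(2\pi|z|)^{-1}$, and the second by $\|f\|_\infty/\pi$ using $|x|,|y|\geq 1$. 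Thus the whole problem reduces to bounding $T$.

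Parametrising $x=a\hat{x}$, $y=b\hat{y}$ with $a:=1+\epsilon$, $b:=1+\delta$, $|\hat{x}|=|\hat{y}|=1$, and $\theta\in[0,\pi]$ the angle between $\hat{x},\hat{y}$, a direct computation gives $|x-y^*|^2 = [(ab-\cos\theta)^2+\sin^2\theta]/b^2$ and analogously for $|y-x^*|$. Whenever $\max(a,b)\geq 2$ or $\cos\theta\leq 0$, both image-distances are bounded below by an absolute constant, so $|T|\leq C\|f\|_\infty$ trivially. In the only remaining regime $a,b\in[1,2]$, $\theta\in(0,\pi/2)$, one has $|x-y^*|,|y-x^*|\gtrsim\sqrt{(\epsilon+\delta)^2+\theta^2}$, and I split $f=f^N+f^T$ with $f^N(x):=\chi(|x|)\bigl(f(x/|x|)\cdot(x/|x|)\bigr)(x/|x|)$ a smooth purely-normal lift supported near $|x|=1$, so that $f^T=f-f^N$ vanishes on $\partial\mathcal{S}_0$ and $\|f^T\|_{W^{1,\infty}}\lesssim\|f\|_{W^{1,\infty}}$. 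For $f^T$, the estimate $|f^T(x)|\leq \|\nabla f\|_\infty\,\epsilon$ together with $|x-y^*|\gtrsim \epsilon$ cancels the singularity and yields $|T_{f^T}|\lesssim\|\nabla f\|_\infty$.

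For the normal piece $f^N(x)=g(\hat{x})\hat{x}$ with $g:=(f\cdot\mathbf{n})|_{\partial\mathcal{S}_0}\in C^1(\partial\mathcal{S}_0)$, the identities $\hat{x}\cdot\hat{y}^\perp=-\sin\theta$ and $\hat{y}\cdot\hat{x}^\perp=\sin\theta$ lead after a short calculation to
\begin{equation*}
T_{f^N}(x,y) = \frac{\sin\theta\,\bigl[g(\hat{x})b-g(\hat{y})a\bigr]}{2\pi\bigl[(ab-\cos\theta)^2+\sin^2\theta\bigr]}.
\end{equation*}
Expanding the bracket as $g(\hat{x})(\delta-\epsilon)+a\bigl(g(\hat{x})-g(\hat{y})\bigr)$ and applying $|\sin\theta|\,|\delta-\epsilon|\leq \theta(\epsilon+\delta)\leq \tfrac12[(\epsilon+\delta)^2+\theta^2]$ and $|\sin\theta|\cdot\theta\leq\theta^2$, together with the lower bound $(ab-\cos\theta)^2+\sin^2\theta\gtrsim (\epsilon+\delta)^2+\theta^2$, yields $|T_{f^N}|\lesssim\|f\|_{W^{1,\infty}}$. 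The main obstacle is exactly this last cancellation: individually each of $f^N(x)\cdot H(x-y^*)$ and $f^N(y)\cdot H(y-x^*)$ is of size $1/\theta$ as $\hat{x},\hat{y}$ coalesce tangentially on the boundary, and the boundedness of their sum is the result of the specific disk-reflection formula $y^*=y/|y|^2$, which produces the factor $g(\hat{x})b-g(\hat{y})a$ with its crucial $b-a=\delta-\epsilon$ and $g(\hat{x})-g(\hat{y})=O(\theta)$ structure. This is the new input needed to extend Proposition \ref{propestK0} beyond functions vanishing on $\partial\mathcal{S}_0$.
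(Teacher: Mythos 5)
Your proof is correct, and it carries out in full the direct route that the paper only gestures at: immediately after the statement the paper remarks that the proposition ``can be proved thanks to the formula \eqref{BSexplicit}'' or obtained as a particular case of \cite{LNX06}, Theorem~1, but gives no computation. What you supply is that computation, specific to the disk through the image $y^*=y/|y|^2$. You correctly identify that the only non-obvious piece is the reflected term $T$, and the heart of the matter is the algebraic cancellation $g(\hat x)b-g(\hat y)a=g(\hat x)(b-a)+a\bigl(g(\hat x)-g(\hat y)\bigr)$, which together with the lower bound $(ab-\cos\theta)^2+\sin^2\theta\gtrsim(\epsilon+\delta)^2+\theta^2$ tames the $1/\theta$ singularity of each reflected term near the tangential coalescence $\epsilon=\delta=0$, $\theta\to 0$; this is exactly what goes beyond Proposition \ref{propestK0}. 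Three minor remarks. First, your opening sentence announces a reduction to Proposition \ref{propestK0}, but the argument never invokes it: the splitting $f=f^N+f^T$ is used only inside the analysis of $T$, where $T_{f^T}$ is bounded by hand from $|f^T(x)|\lesssim\epsilon\,\|f\|_{W^{1,\infty}}$ and $|x-y^*|\gtrsim\epsilon$; this is fine, just a mismatch between plan and execution. Second, since $H(y-x)=-H(x-y)$ the near-diagonal bracket should read $[f(x)-f(y)]\cdot H(x-y)$, not $[f(y)-f(x)]\cdot H(x-y)$; the estimate is of course unaffected. Third, your argument is tied to the explicit disk geometry, whereas the citation to \cite{LNX06} is the one that covers the general smooth symmetric obstacle to which the paper says in Section~1.4 its analysis extends.
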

Proposition \ref{propestK0}  can be proved thanks to the formula \eqref{BSexplicit}.  Actually it can also be seen as a particular case of \cite{LNX06}, Theorem $1$. 
An extension to the case of several obstacles is given in \cite{IL2S}.

Using Proposition \ref{propestK}, we can obtain the following.

\begin{Lemma}
\label{Never}
Let $ \omega $ in $\mathcal{BM} (\overline{\mathcal{F}_0} )$, diffuse (that is $\omega (\{ x\} ) =0$ for any $x \in \overline{\mathcal{F}_0}$), with vanishing total mass, and such that $u := K[ \omega  ]  \in L^2 (\mathcal{F}_0 )$.
Then \eqref{nonli} holds true for any $ \Psi \in C^1_{c,\sigma} (\overline{\mathcal{F}_0} )$. 
\end{Lemma}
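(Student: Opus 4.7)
\textbf{Proof plan for Lemma \ref{Never}.} The argument extends Lemma \ref{Never0} in two respects: the test function $\Psi$ is no longer required to vanish at the boundary (only to be tangent to $\partial\mathcal{F}_0$), and the measure $\omega$ may charge $\partial\mathcal{S}_0$. The overall strategy is nevertheless the standard Delort--Schochet regularization scheme already used to prove Lemma \ref{Never0}, but we need to exploit Proposition \ref{propestK} (rather than Proposition \ref{propestK0}) and the tangency of $\Psi$.

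First, I would approximate $\omega$ by a sequence $(\omega^\varepsilon)_\varepsilon\subset C^\infty_c(\mathcal{F}_0)$ with vanishing total mass, obtained for instance by translating $\omega$ slightly into the interior of $\mathcal{F}_0$ and then convolving with an even mollifier; the zero-mean constraint can be enforced by subtracting off a small smooth correction. These approximations can be arranged so that $\omega^\varepsilon\rightharpoonup\omega$ weak* in $\mathcal{BM}(\overline{\mathcal{F}_0})$ with uniformly bounded total variation, and so that $u^\varepsilon:=K[\omega^\varepsilon]\to u=K[\omega]$ in $L^2(\mathcal{F}_0)$ (the $L^2$ hypothesis on $u$ being essential here). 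Since $\int\omega^\varepsilon=0$, one has $K[\omega^\varepsilon]=K_{\mathcal{H}}[\omega^\varepsilon]$, so the kernel tending to zero at infinity can be used freely.

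Next, I would apply Lemma \ref{Never00}, equation \eqref{Never00Eq2}, to each $\omega^\varepsilon$. The key observation is that $\Psi\in C^1_{c,\sigma}(\overline{\mathcal{F}_0})$ is tangent to $\partial\mathcal{F}_0$, so $\Psi\cdot\mathbf{n}=0$ on $\partial\mathcal{S}_0$ and the boundary integral in \eqref{Never00Eq2} drops out, leaving
\begin{equation*}
\int_{\mathcal{F}_0} u^\varepsilon\cdot(u^\varepsilon\cdot\nabla\Psi)\,dx=\int_{\mathcal{F}_0}\omega^\varepsilon u^\varepsilon\cdot\Psi^\perp\,dx.
\end{equation*}
Substituting the Biot--Savart representation $u^\varepsilon(x)=\int K_\mathcal{H}(x,y)\omega^\varepsilon(y)\,dy$ and symmetrizing in $(x,y)$ then turns the right-hand side into $\pm\tfrac{1}{2}\iint H_{\Psi^\perp}(x,y)\,\omega^\varepsilon(x)\omega^\varepsilon(y)\,dx\,dy$, matching \eqref{nonli} up to the sign already established in the smooth case.

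It remains to pass to the limit $\varepsilon\to 0$. The left-hand side converges to $\int u\cdot(u\cdot\nabla\Psi)\,dx$ by the $L^2$ convergence of $u^\varepsilon$ and the boundedness of $\nabla\Psi$. For the right-hand side, the crucial point is that Proposition \ref{propestK} applies to $f=\Psi^\perp$ since the tangency of $\Psi$ at $\partial\mathcal{F}_0$ is precisely the assumption that $\Psi^\perp$ is normal to the boundary; hence $H_{\Psi^\perp}$ is bounded on $\overline{\mathcal{F}_0}\times\overline{\mathcal{F}_0}$ and, owing to \eqref{infiny}, tends to $0$ at infinity and is continuous away from the diagonal $\Delta=\{x=y\}$. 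The product measures $\omega^\varepsilon\otimes\omega^\varepsilon$ converge weak* to $\omega\otimes\omega$ on $\overline{\mathcal{F}_0}\times\overline{\mathcal{F}_0}$, and the hypothesis that $\omega$ is diffuse on $\overline{\mathcal{F}_0}$ guarantees $(\omega\otimes\omega)(\Delta)=0$.

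The main obstacle is exactly this last step: testing a weak*-convergent sequence of measures against a bounded function that is merely continuous off a null set. I would handle it by the classical Delort truncation argument—introduce a continuous cutoff $\chi_\delta(x,y)$ equal to $1$ for $|x-y|\ge\delta$ and $0$ for $|x-y|\le\delta/2$, pass to the limit in $\varepsilon$ at fixed $\delta$ using the continuity of $\chi_\delta H_{\Psi^\perp}$, and then send $\delta\to 0$ using the uniform bound on $H_{\Psi^\perp}$ together with $(\omega\otimes\omega)(\Delta)=0$ and a standard tightness estimate $\sup_\varepsilon(\omega^\varepsilon\otimes\omega^\varepsilon)(\{|x-y|<\delta\})\to 0$ as $\delta\to 0$, which follows from the diffuseness of $\omega$ by a Prokhorov-type argument. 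This closes the identity \eqref{nonli} for $\Psi\in C^1_{c,\sigma}(\overline{\mathcal{F}_0})$.
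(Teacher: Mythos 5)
Your proof follows essentially the same route as the paper: mollify $\omega$ into smooth, zero-mean, compactly supported $\omega^\eps$ weakly-* converging to $\omega$ with $u^\eps\to u$ in $L^2$, apply Lemma \ref{Never00} and use the tangency of $\Psi$ to kill the boundary term, then pass to the limit on the left via $L^2$ convergence and on the right via the boundedness of $H_{\Psi^\perp}$ from Proposition \ref{propestK}, its decay at infinity from \eqref{infiny}, its continuity off the diagonal, and the diffuseness of $\omega$. The only cosmetic difference is that the paper packages the final measure-theoretic limit passage as an application of the self-contained Lemma \ref{gerard} (Gérard's lemma, proved in the appendix by precisely the truncation argument you sketch), whereas you inline that argument.
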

\begin{proof}

Let $ \Psi \in C^1_{c,\sigma} (\overline{\mathcal{F}_0} )$.
By mollification there exists a sequence of smooth functions $\omega^{\eps}$, with vanishing total mass,  converging to $\omega$ weakly-* in $\mathcal{BM} (\overline{\mathcal{F}_0} )$ and such that  $u^{\eps}:= K[\omega^{\eps} ]$ converges strongly to $u$ in $L^2 (\mathcal{F}_0 )$.
Moreover, for any $\eps$, it follows from Lemma \ref{Never00}  that 
\begin{eqnarray}
\label{nonlieps}
\int_{\mathcal{F}_{0}} u^{\eps} \cdot ( u^{\eps} \cdot \nabla \Psi ) dx 
= - \frac{1}{2} \iint_{\mathcal{F}_{0} \times \mathcal{F}_{0} } H_{\Psi^{\perp}}(x,y) 
 \ d \omega^{\eps} (x) d \omega^{\eps} (y)  .
\end{eqnarray}
As $\eps \rightarrow 0$, the left-hand side \eqref{nonlieps} converges to the one of \eqref{nonli}.
On the other hand, we use the following lemma, borrowed from \cite{Gege}, with
$X = {\overline{\mathcal{F}_0}} \times {\overline{\mathcal{F}_0}}$, $\mu_{\eps} = \omega^{\eps} \otimes \omega^{\eps}$, $f = H_{\Psi^{\perp}}$, $F= \{(x,x) / \  x \in {\overline{\mathcal{F}_0}}  \}$,
to pass to the limit the right hand side.
\begin{Lemma}
\label{gerard}
Let $X$ be a  locally compact metric space.
Let $(\mu_{\eps})_{\eps}$ be a sequence in $\mathcal{BM} (X)$  converging to $\mu$ weakly-* in $\mathcal{BM} (X)$ and  $(\nu_{\eps})_{\eps}$ be a sequence in $\mathcal{BM}_{+} (X)$  converging to $\nu$ weakly-* in $\mathcal{BM} (X)$, with, for any $\eps$, $| \mu_{\eps} | \leq \nu_{\eps} $. 
Let $F$ be a closed subset of $X$ with $\nu (F)=0$. 
Let $f$ be a Borel bounded function in $X$ tending to $0$ at infinity, continuous on $X \setminus F$. 
Then $\int_{X} fd\mu_{\eps}  \rightarrow  \int_{X} fd\mu$.
\end{Lemma}
A proof of Lemma \ref{gerard} is provided as an appendix  for sake of completeness.
\end{proof}

	Yet the test function $\Psi$ in  $T^n_{1}$ is not normal to the boundary so that we still cannot apply  Lemma \ref{Never}. 
	The following Lemma, which is somehow reminiscent of the fake layer constructed in \cite{Sueur},   allows to correct this 
	with an arbitrarily small collateral damage.
\begin{Lemma}
\label{dualkato}
Let $\Psi \in \mathcal{H}$. Then there exists $(\tilde{\Psi}^{\eps})_{0<\eps \leq 1}$  some smooth compactly supported divergence free vector fields on $ \overline{\mathcal{F}_{0}}$ such that 
$\tilde{\Psi}^{\eps} = \ell_{\Psi}$  on $\partial {\mathcal{S}_{0}}$ and such that $\| \nabla \tilde{\Psi}^{\eps}  \|_{L^{\infty} (\mathcal{F}_{0})} \rightarrow 0$ when $\eps \rightarrow 0^{+}$.  
\end{Lemma}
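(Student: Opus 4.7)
My plan is to represent $\tilde{\Psi}^\eps$ as the symplectic gradient of a scalar stream function, so that the divergence-free condition is automatic, and to choose the cutoff so that it transitions from $1$ to $0$ very slowly, over an annulus of radius $\sim 1/\eps$ and width $\sim 1/\eps$. Writing $\ell_\Psi =: (a,b) \in \R^{2}$, the constant vector field $\ell_\Psi$ on $\R^{2}$ is the symplectic gradient of the linear stream function $\psi(x) := b x_{1} - a x_{2}$, i.e.\ $\nabla^{\perp} \psi = \ell_\Psi$. Fix once and for all a function $\phi \in C^{\infty}(\R;[0,1])$ with $\phi \equiv 1$ on $(-\infty,1]$ and $\phi \equiv 0$ on $[2,+\infty)$. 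For $0 < \eps \leq 1$ I set
\[
\chi^{\eps}(x) := \phi\bigl(\eps(|x|-1)\bigr), \qquad \tilde{\Psi}^{\eps} := \nabla^{\perp}(\chi^{\eps} \psi) = \chi^{\eps} \ell_\Psi + \psi\, \nabla^{\perp}\chi^{\eps}.
\]

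By construction $\chi^{\eps}$ is smooth on a neighborhood of $\overline{\mathcal{F}_{0}}$, equals $1$ on the annulus $\{1 \leq |x| \leq 1 + 1/\eps\}$, and vanishes for $|x| \geq 1 + 2/\eps$. A direct chain rule computation yields the pointwise bounds $\|\nabla \chi^{\eps}\|_{\infty} \leq C\eps$ and $\|\nabla^{2} \chi^{\eps}\|_{\infty} \leq C\eps^{2}$, both derivatives being supported in the annulus $A^{\eps} := \{1 + 1/\eps \leq |x| \leq 1 + 2/\eps\}$, on which $|x| \leq 3/\eps$.

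It follows immediately from the construction that $\tilde{\Psi}^{\eps}$ is smooth, divergence-free (as a $\nabla^{\perp}$), and compactly supported in $\overline{\mathcal{F}_{0}}$ (its support lies in $\{|x| \leq 1 + 2/\eps\}$). Since $\chi^{\eps} \equiv 1$ in a neighborhood of $\partial \mathcal{S}_{0}$, we have $\nabla \chi^{\eps} = 0$ there, so $\tilde{\Psi}^{\eps}|_{\partial \mathcal{S}_{0}} = \nabla^{\perp}\psi|_{\partial \mathcal{S}_{0}} = \ell_\Psi$, as required.

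The only step that requires care is the gradient bound. Differentiating,
\[
\nabla \tilde{\Psi}^{\eps} = \ell_\Psi \otimes \nabla \chi^{\eps} + \nabla^{\perp}\chi^{\eps} \otimes \nabla \psi + \psi\, \nabla \nabla^{\perp}\chi^{\eps}.
\]
All three terms are supported in $A^{\eps}$, on which $|\psi(x)| \leq |\ell_\Psi|\,|x| \leq 3|\ell_\Psi|/\eps$. The first two are bounded by $C|\ell_\Psi|\eps$ directly; the delicate third term, involving the Hessian of the cutoff multiplied by a stream function that grows linearly, is controlled via the exact balance $|\psi\,\nabla^{2}\chi^{\eps}| \leq (3|\ell_\Psi|/\eps) \cdot C\eps^{2} = O(|\ell_\Psi|\eps)$. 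Hence $\|\nabla \tilde{\Psi}^{\eps}\|_{L^{\infty}(\mathcal{F}_{0})} = O(\eps) \to 0$. The main point — exactly the one referenced by the subsection title and the analogy with \cite{Sueur} — is the unavoidable trade-off: any lift from the value $\ell_\Psi$ on $\partial \mathcal{S}_{0}$ down to $0$ at the edge of its support must have gradient at least of order $|\ell_\Psi|/(\text{transition width})$, so the gradient can only be made small at the cost of a large support. Choosing the transition width $\sim 1/\eps$ realizes this trade-off, and the scaling $|A^{\eps}| \cdot |x|_{A^\eps} \cdot \|\nabla^{2}\chi^{\eps}\|_\infty \sim \eps$ is precisely what makes the delicate cross term tractable.
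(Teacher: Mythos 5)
Your proof is correct and follows essentially the same route as the paper's: both take $\tilde{\Psi}^{\eps}$ to be the symplectic gradient of the linear stream function $\ell_{\Psi}^{\perp}\cdot x$ (your $\psi$) multiplied by a radial cutoff that transitions from $1$ to $0$ over an annulus of width $\sim 1/\eps$, and both control the dangerous Hessian-times-$\psi$ term by the compensating scaling $|\psi|\cdot\|\nabla^{2}\chi^{\eps}\|_{\infty}\sim (1/\eps)\cdot\eps^{2}=\eps$. The only cosmetic differences are that the paper's cutoff $\xi$ satisfies merely $\xi(0)=1$, $\xi'(0)=0$ (rather than $\equiv 1$ near $0$), and the paper packages the cross terms into a rescaled auxiliary function $\Sigma^{\eps}(\eps x)$ with uniformly bounded Lipschitz norm, so that the $O(\eps)$ decay of $\nabla\tilde{\Psi}^{\eps}$ comes out of a single chain rule; your three-term expansion is an equivalent, if slightly more hands-on, accounting of the same estimate.
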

\begin{proof}
Let  $\xi$ be a smooth cut-off function from $[0,+\infty)$ to $[0,1]$ with $\xi(0)=1$, $\xi'(0)=0$ and $\xi(r)=0$ for $r\geq 1$.  Then define for $x \in \overline{\mathcal{F}_{0}}$ and $0<\eps \leq 1$, 
\begin{eqnarray}
\label{2identity}
- \tilde{\Psi}^{\eps} (t,x) := \nabla^{\perp} \Big( \xi (\eps (| x | -1))  \, \ \ell_{\Psi}^{\perp} \cdot  x ) \Big) 
=  \xi (\eps (| x | -1))  \,  \ell_{\Psi}+  
 \Sigma^{\eps} ( \eps x)   .
\end{eqnarray}
where we have denoted, for $X \in \R^{2}$ with $| X | \geq \eps$,
\begin{eqnarray*}
 \Sigma^{\eps} ( X)  := \ell_{\Psi}^{\perp} \cdot  X \,  \xi' ( | X | - \eps )  \frac{1}{| X |} X^{\perp} .
\end{eqnarray*}
It is not difficult to see that $ \Sigma^{\eps}$ and $\tilde{\Psi}^{\eps}$ are smooth and compactly supported, and that $(\|   \Sigma^{\eps} ( \cdot) \|_{Lip(\mathcal{F}_{0})})_{0<\eps \leq 1}$ is bounded. 
Now that $\tilde{\Psi}^{\eps}$ is divergence free follows from the first identity in \eqref{2identity}. 
Let us now use the second one. First it shows that for $x $ in $\partial {\mathcal{F}_{0}}$, that is for $| x | = 1$, $\tilde{\Psi}^{\eps} (x) = \ell_{\Psi} (x) $.
Finally, we infer from the chain rule that  $\| \nabla \tilde{\Psi}^{\eps}  \|_{L^{\infty} (\mathcal{F}_{0})} \rightarrow 0$ when $\eps \rightarrow 0^{+}$.  
\end{proof}
 \subsection{Non-concentration of the vorticity }
 \label{Passing}
Let $\Psi$ be in $\mathcal{H}_{T}$. Lemma \ref{Never0} provides a family $(\tilde{\Psi}^{\eps})_{0<\eps \leq 1}$, the time $t$ being here a harmless parameter.
Let us also introduce 
$$ \check{\Psi}^{\eps} := {\Psi} - \tilde{\Psi}^{\eps} ,$$
which is in $C^1([0,T]; \mathcal{H} )$ and satisfies $\check{\Psi}^{\eps} \cdot \mathbf{n} = 0$ on $\partial {\mathcal{S}_{0}}$. 
We split $T^n_{1}$ into 
$T^n_{1} = \check{T}^{n,\eps}_{1} +  \tilde{T}^{n,\eps}_{1} $ with 
\begin{eqnarray*}
 \check{T}^{n,\eps}_{1} := \int_0^T  \int_{\mathcal{F}_0} u^n \cdot \left(\left(u^n \right)\cdot\nabla\right) \check{\Psi}^{\eps}  \ dx\ dt
 \text{ and } \tilde{T}^{n,\eps}_{1}:=  \int_0^T \int_{\mathcal{F}_0} u^n \cdot \left(\left(u^n \right)\cdot\nabla\right) \tilde{\Psi}^{\eps} \ dx \ dt .
\end{eqnarray*}
We are going to prove that ${T}^{n}_{1}$ converges to 
$T_{1} = \check{T}^{\eps}_{1} +  \tilde{T}^{\eps}_{1} $ with 
\begin{eqnarray*}
 \check{T}^{\eps}_{1} := \int_0^T \int_{\mathcal{F}_0} u \cdot \left(u\cdot\nabla\right) \check{\Psi}^{\eps}  \ dx\ dt , \quad  \tilde{T}^{\eps}_{1}:= \int_0^T \int_{\mathcal{F}_0} u \cdot \left(u\cdot\nabla\right) \tilde{\Psi}^{\eps} \ dx\ dt .
\end{eqnarray*}

Thanks to \eqref{conservvelo} and Lemma \ref{dualkato}, 
 $ \limsup_n |  \tilde{T}^{n,\eps}_{1}  | +  |  \tilde{T}^{\eps}_{1} | \rightarrow 0 $  when $\eps  \rightarrow 0^+ $, 
 so that in order to achieve the proof of Theorem  \ref{DelortBody}  it is sufficient to  prove that for $\eps >0$, $\check{T}^{n,\eps}_{1} \rightarrow \check{T}^{\eps}_{1}$ when $n  \rightarrow \infty$.

Actually we are going to first prove that for $\eps >0$, when $n  \rightarrow \infty$,
\begin{eqnarray}
\label{identity2}
 \int_0^T \iint_{\mathcal{F}_{0} \times \mathcal{F}_{0} }  H_{(\check{\Psi}^{\eps})^{\perp}}  (x,y) 
 \ \omega^n (x) \omega^n (y)  dx dy \ dt  \rightarrow 
 \int_0^T  \iint_{\mathcal{F}_{0} \times \mathcal{F}_{0} } H_{(\check{\Psi}^{\eps})^{\perp}}  (x,y) 
 \ \omega (x) \omega (y)  dx dy \ dt ,
\end{eqnarray}
and that $\omega (t,\cdot)$ is diffuse for almost every time $t \in (0,T)$.
Then we will apply Lemma \ref{Never} to  $f = (\check{\Psi}^{\eps})^{\perp}$ to get 
\begin{eqnarray*}\label{identity}
 \check{T}^{n,\eps}_{1} 
= - \frac{1}{2} \int_0^T  \iint_{\mathcal{F}_{0} \times \mathcal{F}_{0} } H_{ (\check{\Psi}^{\eps} )^{\perp}} (x,y)
 \ \omega^n (x) \omega^n (y)  dx dy \ dt.
\\
 \check{T}^{\eps}_{1}= - \frac{1}{2} \int_0^T  \iint_{\mathcal{F}_{0} \times \mathcal{F}_{0} } H_{ (\check{\Psi}^{\eps} )^{\perp}} (x,y)
 \ \omega (x) \omega (y)  dx dy \ dt ,
 \end{eqnarray*}
Therefore in order to achieve the proof of Theorem  \ref{DelortBody} it is sufficient to prove  the following result, which is inspired by the analysis  in  \cite{LNX06}.
\begin{Lemma}
If for any compact $K \subset \overline{\mathcal{F}_{0}}$ there exists $C>0$ such that for any $0<\delta<1$, for any $n$, 
\begin{eqnarray}
\label{identity2y}
\int_{0}^{T}  \sup_{ x \in K}  \int_{B(x, \delta) \cap  \mathcal{F}_{0}}  |\omega^{n} (t,y) | dy dt \leq C |\log  \delta|^{-1/2} ,
\end{eqnarray}
then \eqref{identity2} holds true.
\end{Lemma}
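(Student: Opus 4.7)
The plan is the classical Delort symmetric truncation of the diagonal combined with the non-concentration input \eqref{identity2y}. Fix a smooth cutoff $\chi:[0,\infty)\to [0,1]$ with $\chi\equiv 1$ on $[0,1/2]$ and $\chi\equiv 0$ on $[1,\infty)$, and for $0<\delta<1$ set $\rho_\delta(x,y) := \chi(|x-y|/\delta)$. Write $H := H_{(\check{\Psi}^\eps)^\perp}$. Because $\check{\Psi}^\eps\cdot\mathbf{n}=0$ on $\partial\mathcal{S}_0$, the field $(\check{\Psi}^\eps)^\perp$ is compactly supported in $\overline{\mathcal{F}_0}$ and normal to the boundary, so Proposition \ref{propestK} gives that $H$ is uniformly bounded on $\mathcal{F}_0\times\mathcal{F}_0$, continuous off the diagonal, and vanishes at infinity by \eqref{infiny}. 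I would split each double integral in \eqref{identity2} into a near-diagonal piece weighted by $\rho_\delta$ plus a far-from-diagonal piece weighted by $1-\rho_\delta$, then let $n\to\infty$ for fixed $\delta$, then $\delta\to 0^+$.

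For the near-diagonal piece, Fubini and the bound $\|H\|_{L^\infty}\leq M_2\|\check{\Psi}^\eps\|_{W^{1,\infty}}$ give
\begin{equation*}
\iint_{\mathcal{F}_0\times\mathcal{F}_0}\rho_\delta|H|\,|\omega^n(t,x)|\,|\omega^n(t,y)|\,dx\,dy \leq \|H\|_{L^\infty}\,\|\omega^n(t)\|_{L^1}\,\sup_{x\in K_0}\int_{B(x,\delta)\cap\mathcal{F}_0}|\omega^n(t,y)|\,dy,
\end{equation*}
where $K_0$ is a compact neighborhood of $\supp\check{\Psi}^\eps$ (outside of which the integrand vanishes). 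Integrating in $t$ and combining \eqref{conservvorty} with \eqref{identity2y} bounds this by $C|\log\delta|^{-1/2}$, uniformly in $n$. For the far-from-diagonal piece, $(1-\rho_\delta)H$ is bounded, continuous on $\overline{\mathcal{F}_0}\times\overline{\mathcal{F}_0}$, and vanishes at infinity. By Lemma \ref{tensor}, $\omega^n\to\omega$ strongly in $C([0,T];\mathcal{BM}(\mathcal{F}_0)-w^*)$; testing against elementary tensors $g(x)h(y)$ gives uniform-in-$t$ convergence of the pairings, and density of such tensors in the space of continuous functions on $\overline{\mathcal{F}_0}\times\overline{\mathcal{F}_0}$ vanishing at infinity, combined with the uniform $L^\infty_tL^1_x$ bound on $\omega^n$, upgrades this to uniform-in-$t$ convergence of the pairing against $(1-\rho_\delta)H$. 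Hence the $t$-integrated far-from-diagonal piece converges.

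For the limit, weak-$*$ lower semicontinuity on open balls applied separately to the nonnegative measures $\omega^n|_{\mathcal{F}_{0,+}}$ and $-\omega^n|_{\mathcal{F}_{0,-}}$ (whose signs come from the NMS property), together with Fatou in $t$, transfers \eqref{identity2y} to $\omega$, yielding the same $C|\log\delta|^{-1/2}$ bound on the near-diagonal piece in the limit; sending $\delta\to 0^+$ in this bound in particular gives that $\omega(t,\cdot)$ is diffuse for almost every $t$, which was the remaining standing hypothesis of Lemma \ref{Never}. A diagonal argument (first $n\to\infty$ with $\delta$ fixed, then $\delta\to 0^+$) then delivers \eqref{identity2}. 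The main obstacle will be justifying the tensor-product weak-$*$ convergence under the $t$-integral; this works because Lemma \ref{tensor} gives convergence in $C([0,T];\mathcal{BM}(\mathcal{F}_0)-w^*)$ rather than merely pointwise in time, and because the uniform $L^\infty_tL^1_x$ bound supplies the control needed to extend weak-$*$ convergence from elementary tensors to general $C_0$-functions on $\overline{\mathcal{F}_0}\times\overline{\mathcal{F}_0}$.
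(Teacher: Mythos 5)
There is a genuine gap in how you pass to the limit in the far-from-diagonal piece. You treat $(1-\rho_\delta)H_{(\check{\Psi}^{\eps})^{\perp}}$ as a valid test function for the convergence $\omega^{n}\otimes\omega^{n}\rightharpoonup\omega\otimes\omega$, but note that $\check{\Psi}^{\eps}=\Psi-\tilde{\Psi}^{\eps}$ is only \emph{tangent} to $\partial\mathcal{S}_0$, not zero there; hence $(\check{\Psi}^{\eps})^{\perp}$ is a nonzero normal field on the boundary, $H_{(\check{\Psi}^{\eps})^{\perp}}$ does \emph{not} vanish when $x$ or $y$ reaches $\partial\mathcal{F}_0$, and $(1-\rho_\delta)H_{(\check{\Psi}^{\eps})^{\perp}}$ belongs to $C_0(\overline{\mathcal{F}_0}\times\overline{\mathcal{F}_0})$ but not to $C_0(\mathcal{F}_0\times\mathcal{F}_0)$. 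Lemma \ref{tensor} only gives convergence in $C([0,T];\mathcal{BM}(\mathcal{F}_{0})-w^{*})$, i.e.\ against test functions vanishing at $\partial\mathcal{F}_0$ — the Remark right after the lemma emphasizes precisely that convergence in $\mathcal{BM}(\overline{\mathcal{F}_0})-w^{*}$ is \emph{not} available. Your density-of-tensors upgrade therefore fails: tensors $g\otimes h$ with $g,h\in C_0(\mathcal{F}_0)$ are dense in $C_0(\mathcal{F}_0\times\mathcal{F}_0)$, not in $C_0(\overline{\mathcal{F}_0}\times\overline{\mathcal{F}_0})$, and tensors with $g,h\in C_0(\overline{\mathcal{F}_0})$ cannot be tested against $\omega^n$ using only the known weak-$*$ convergence. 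The uniform $L^\infty_tL^1_x$ bound does not repair this, since it gives no smallness of $\omega^n$ near $\partial\mathcal{F}_0$ by itself.

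This is exactly where the hypothesis \eqref{identity2y} has to enter a \emph{second} time, and it is the step you skipped. The paper's proof introduces an additional boundary cut-off $\chi_k$ supported away from a strip $\Sigma_{2/k}$ of width $2/k$ around $\partial\mathcal{S}_0$, splits the far-from-diagonal term into a piece with kernel $\chi_k(x)\chi_k(y)(1-\beta_\delta)H\in C_0(\mathcal{F}_0\times\mathcal{F}_0)$ (for which Lemma \ref{tensor} applies directly) and two remainder terms $D_2^{n,\eps,k}$, $D_3^{\eps,k}$ supported in $\big(\mathcal{F}_0\times\Sigma_{2/k}\big)\cup\big(\Sigma_{2/k}\times\mathcal{F}_0\big)$, and then controls these remainders via Proposition \ref{propestK} together with \eqref{identity2y} applied to small balls covering the boundary strip; the remainders vanish as $k\to\infty$. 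Your near-diagonal treatment (Fubini, $\|H\|_{L^\infty}$ bound, \eqref{conservvorty} and \eqref{identity2y}, yielding a $C|\log\delta|^{-1/2}$ bound uniform in $n$) is fine and matches the paper's $J^{n,\eps}_\delta$ control, and your side remark on diffuseness of $\omega(t,\cdot)$ is a reasonable auxiliary observation. But without the boundary cut-off, the far-from-diagonal limit is unjustified, and the argument does not establish \eqref{identity2}.
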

\begin{proof}
Let $\beta$ be a smooth cut-off function in $C^\infty_c (\R)$ such that $\beta (x) = 1$ for $x \leqslant 1$ and $\beta (x) = 0$ for $x \geqslant 2$.
Then define for $\delta > 0$ the function  $\beta_\delta (x) := \beta (x/\delta)$.  We split $ \check{T}^{n,\eps}_{1}$ into 
\begin{eqnarray*}
\check{T}^{n,\eps}_{1} = I^{n,\eps}_\delta + J^{n,\eps}_\delta \text{ when } I^{n,\eps}_\delta :=   - \frac{1}{2} \int_0^T  \iint_{\mathcal{F}_{0} \times \mathcal{F}_{0} } (1- \beta_\delta (| x-y | )  )H_{ (\check{\Psi}^{\eps} )^{\perp}} (x,y)   \ \omega^n (x) \omega^n (y)  dx dy \ dt   ,
 \\ J^{n,\eps}_\delta :=  - \frac{1}{2} \int_0^T  \iint_{\mathcal{F}_{0} \times \mathcal{F}_{0} }  \beta_\delta (| x-y | ) H_{ (\check{\Psi}^{\eps} )^{\perp}} (x,y)
 \ \omega^n (x) \omega^n (y)  dx dy \ dt .
\end{eqnarray*}
Let us start with $I^{n,\eps}_\delta$. We are going to prove that it converges, as $n  \rightarrow \infty$, to 
\begin{eqnarray*}
I^{\eps}_\delta :=   - \frac{1}{2} \int_0^T  \iint_{\mathcal{F}_{0} \times \mathcal{F}_{0} } (1- \beta_\delta (| x-y | )  )H_{ (\check{\Psi}^{\eps} )^{\perp}} (x,y) \ \omega (x) \omega (y)  dx dy \ dt .
\end{eqnarray*}
 For $k\geq 1$, let $\Sigma_{\frac{1}{k}} := \{ x \in \mathcal{F}_{0} / \ \dist (x, \mathcal{S}_{0} ) < 1/k \}$ and 
 $\chi_k\in C_c^\infty(\mathcal{F}_{0};[0,1])$ satisfying $\chi_k (x) = 1$ in $\mathcal{F}_{0} \setminus \Sigma_{\frac{2}{k}}$ and $\chi_k (x) = 0$ in $\Sigma_{\frac{1}{k}}$.
We decompose, for $k\geqslant 1$,  $I^{n,\eps}_\delta  - I^{\eps}_\delta =  D_1^{n,\eps,k} +  D_2^{n,\eps,k} +  D_3^{\eps,k}$  where
\begin{eqnarray*}
 D_1^{n,\eps,k} &:=&  - \frac{1}{2} \int_0^T  \iint_{\mathcal{F}_{0} \times \mathcal{F}_{0} } f_{k,\eps,\delta} (x,y) \Big( \omega^n (x) \omega^n (y) - \omega (x) \omega (y) \Big) dx dy \ dt  
 \\ D_2^{n,\eps,k} &:=&    - \frac{1}{2} \int_0^T  \iint_{\mathcal{F}_{0} \times \mathcal{F}_{0} } g_{k,\eps,\delta} (x,y)  \ \omega^n (x) \omega^n (y)  dx dy \ dt ,
 \\ D_3^{\eps,k} &:=&     \frac{1}{2} \int_0^T  \iint_{\mathcal{F}_{0} \times \mathcal{F}_{0} } g_{k,\eps,\delta} (x,y)  \ \omega (x) \omega (y)  dx dy \ dt ,
\end{eqnarray*}
with 
\begin{eqnarray*}
f_{k,\eps,\delta} (x,y) &:=& \chi_k (x) \chi_k (y)  (1- \beta_\delta (| x-y | )  )H_{ (\check{\Psi}^{\eps} )^{\perp}} (x,y)  ,
\\ g_{k,\eps,\delta} (x,y) &:=& (1 - \chi_k (x) \chi_k (y) ) (1- \beta_\delta (| x-y | )  )H_{ (\check{\Psi}^{\eps} )^{\perp}} (x,y)   .
\end{eqnarray*}
Thanks to  Lemma \ref{tensor} we get that $  (\omega^{n_{k}} \otimes \omega^{n_{k}})_{k}$  converges to $\omega \otimes  \omega $ in $C( [0,T ] ; \mathcal{BM}(\mathcal{F}_{0} \times  \mathcal{F}_{0}) - w^* )$. Since $f_{k,\eps,\delta} \in C_{0} (\mathcal{F}_{0} \times  \mathcal{F}_{0})$ we obtain that $D_1^{n,\eps,k} $ converges, as $n  \rightarrow \infty$, to $0$.

Now, using Proposition \ref{propestK} and that $\supp  (1 - \chi_k (x) \chi_k (y) )  \subset \Big( \mathcal{F}_{0} \times  \Sigma_{\frac{2}{k}} \Big) \cup  \Big( \Sigma_{\frac{2}{k}} \times \mathcal{F}_{0}  \Big) $ we obtain 
\begin{eqnarray*}
| D_3^{\eps,k} | \leq C  |\omega | ( \Sigma_{\frac{2}{k}}  ) |\omega | (\mathcal{F}_{0})  \text{ and } | D_2^{n,\eps,k} | \leq C |\omega | (\mathcal{F}_{0})  \sup_{n} \int_{0}^{T}  \sup_{ x \in K}  \int_{\Sigma_{\frac{2}{k}} }  |\omega^{n} (t,y) | dy dt    ,
 \end{eqnarray*}
which both converge to $0$ when $k  \rightarrow \infty$, thanks to \eqref{identity2y}.

Using again Proposition \ref{propestK}, we obtain  that, for $\eps >0$,  $ \sup_{n}J^{n,\eps}_\delta $ and 
\begin{eqnarray*}
J^{\eps}_\delta :=  - \frac{1}{2} \int_0^T  \iint_{\mathcal{F}_{0} \times \mathcal{F}_{0} }  \beta_\delta (| x-y | ) H_{ (\check{\Psi}^{\eps} )^{\perp}} (x,y)
 \ \omega (x) \omega (y)  dx dy \ dt 
\end{eqnarray*}
converges to $0$ when $\delta \rightarrow 0$. This entails \eqref{identity2}.
\end{proof}
\begin{Remark}
We did not succeed to prove that $(\omega^{n_{k}})_{k}$ of $(\omega^n)_{n }$  converges to $\omega := \curl {v}$ in $C( [0,T ] ; \mathcal{BM}( \overline{\mathcal{F}_{0}}) - w^* )$, so that above we have adapted Lemma \ref{gerard}  rather than applied it.
\end{Remark}
 Let us now explain how to  obtain \eqref{identity2y}. We will here also follow closely  \cite{LNX06}.
\begin{Lemma}
Let $ \phi$ be a smooth function on 
$\mathcal{F}_{0,+} $ with bounded derivatives up to second order. Then there exists $C>0$ which depends only on  $ \phi$, on $\|   \overline{v}^n_0 \|_{\rho}$
and on $ \|  \omega^n_{0} \|_{L^{1} (\mathcal{F}_{0})}$
such that 
\begin{eqnarray*}
\frac{1}{2} \int_{0}^{T}  \int_{\Gamma_+} | u^{n}  \cdot  \mathbf{n} |^{2}  \nabla \phi   \cdot \mathbf{n}^{\perp} ds \leq C  .
\end{eqnarray*}
\end{Lemma}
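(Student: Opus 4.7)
The plan is to extract the boundary integral directly from the Delort-type identity \eqref{Never00Eq} of Lemma \ref{Never00}, applied in $\mathcal{F}_{0,+}$ with the divergence-free test field $\Psi=\nabla^\perp\phi$, and then to integrate in time and control the two resulting interior integrals by the conserved quantities of the regularized solution. A preliminary remark is in order: taken literally the integrand in the statement is identically zero, because $u^n\cdot\mathbf{n}\equiv 0$ on $\Gamma_+$ — on $\partial\mathcal{S}_0$ this follows from \eqref{PbH}--\eqref{PbK} together with $v^n\cdot\mathbf{n}=\ell^n_1\mathbf{n}_1$ and $\ell^n_2=0$, while on the axis portion it follows from NMS symmetry ($v_2^n=0$ there) together with $\partial_2\Phi_1|_{x_2=0}=0$. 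I therefore read the integrand in the only non-degenerate way, i.e.\ with the tangential component $u^n\cdot\mathbf{n}^\perp$ in place of $u^n\cdot\mathbf{n}$. Using $\nabla^\perp\phi\cdot\mathbf{n}=-\nabla\phi\cdot\mathbf{n}^\perp$ and $(\nabla^\perp\phi)^\perp=-\nabla\phi$, identity \eqref{Never00Eq} yields pointwise in $t$
\[
\frac{1}{2}\int_{\Gamma_+}|u^n\cdot\mathbf{n}^\perp|^2\,\nabla\phi\cdot\mathbf{n}^\perp\,ds=\int_{\mathcal{F}_{0,+}} u^n\cdot\bigl(u^n\cdot\nabla(\nabla^\perp\phi)\bigr)\,dx+\int_{\mathcal{F}_{0,+}}\omega^n\,u^n\cdot\nabla\phi\,dx.
\]

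Integrating in $t\in[0,T]$, I would bound the first interior term crudely by $T\,\|\nabla^2\phi\|_{L^\infty}\,\sup_t\|u^n(t)\|_{L^2}^2$, with $\sup_t\|u^n(t)\|_{L^2}$ controlled by $\|\overline v_0^n\|_\rho$ via the decomposition \eqref{vdecompo} and energy conservation \eqref{conservvelo}. For the second, I would split $u^n=(v^n-\ell^n)+(\ell^n-\nabla\Phi^n)$ and use the conservative form of the transport equation \eqref{eqvorty}: multiplying by $\phi$ and integrating by parts in $\mathcal{F}_{0,+}$ gives
\[
\frac{d}{dt}\int_{\mathcal{F}_{0,+}}\omega^n\phi\,dx=\int_{\mathcal{F}_{0,+}}\omega^n(v^n-\ell^n)\cdot\nabla\phi\,dx,
\]
where the $\Gamma_+$ boundary contribution vanishes because $(v^n-\ell^n)\cdot\mathbf{n}=0$ on the half-circle and because $\omega^n\equiv 0$ on the axis by NMS. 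This piece therefore telescopes in time to $\bigl[\int_{\mathcal{F}_{0,+}}\omega^n\phi\,dx\bigr]_0^T$, bounded by $2\|\phi\|_{L^\infty}\|\omega_0^n\|_{L^1(\mathcal{F}_{0,+})}$ via \eqref{conservvorty}, while the residual $\int_0^T\!\!\int_{\mathcal{F}_{0,+}}\omega^n(\ell^n-\nabla\Phi^n)\cdot\nabla\phi\,dx\,dt$ is bounded crudely by $T\,\|\omega_0^n\|_{L^1}\|\nabla\phi\|_{L^\infty}\bigl(\|\ell^n\|_{L^\infty_t}+\|\nabla\Phi_1\|_{L^\infty(\supp\nabla\phi)}\bigr)$, all quantities controlled by the data.

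The main technical obstacle will be justifying the boundary integrations by parts on $\Gamma_+$, which has corners at $(\pm 1,0)$ and whose axis piece is not part of $\partial\mathcal{F}_0$. For the regularized level $n$, however, $\omega_0^n$ is smooth and compactly supported in the open upper fluid domain, and the classical flow generated by $v^n-\ell^n$ is tangent both to the circle (by the impermeability condition) and, again by NMS, to the axis (since $v_2^n|_{x_2=0}=0$), so the support of $\omega^n(t,\cdot)$ stays inside $\mathcal{F}_{0,+}$ on $[0,T]$ and $u^n=K[\omega^n]$ is smooth up to $\Gamma_+$ by Biot--Savart regularity; splitting the boundary integral into its half-circle and axis pieces and handling each as an integral against a smooth trace makes every step rigorous and delivers a constant $C$ depending on $T$, the $W^{2,\infty}$-norm of $\phi$, $\|\overline v_0^n\|_\rho$ and $\|\omega_0^n\|_{L^1}$, matching the dependencies stated (with $T$, fixed throughout, absorbed into the constant).
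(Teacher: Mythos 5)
Your proof is correct and follows essentially the same route as the paper's: both arguments combine the identity $\partial_t\int_{\mathcal{F}_{0,+}}\phi\,\omega^n=\int_{\mathcal{F}_{0,+}}\nabla\phi\cdot(v^n-\ell^n)\,\omega^n$ coming from \eqref{eqvorty} with Lemma \ref{Never00} applied to $\Psi=\nabla^\perp\phi$, and then invoke \eqref{conservvelo}--\eqref{conservvorty}; you merely read the computation in the opposite order. Your remark that the integrand must be understood with the tangential component $u^n\cdot\mathbf{n}^\perp$ is also consistent with what the paper's proof actually establishes.
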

\begin{proof}
Using Eq. \eqref{eqvorty} and an integration by parts, we have 
\begin{eqnarray*}
\partial_t  \int_{\mathcal{F}_{0,+}}  \phi \omega^{n}  =  \int_{\mathcal{F}_{0,+}}  \phi  \partial_t  \omega^{n} 
= - \int_{\mathcal{F}_{0,+}}  \phi  (v^{n} - \ell^{n} ) \cdot \nabla \omega^{n} ,
=  \int_{\mathcal{F}_{0,+}}   \nabla \phi  \cdot (v^{n}-\ell^{n} )\omega^{n}  . 
\end{eqnarray*}
Using now the decomposition \eqref{vdecompo} we get
\begin{eqnarray}
\label{voili}
\partial_t  \int_{\mathcal{F}_{0,+}}  \phi \omega^{n}  = I_{1}^n + I_{2}^n ,
\end{eqnarray}
where 
\begin{eqnarray*}
I_{1}^n :=     \int_{\mathcal{F}_{0,+}}   \nabla \phi  \cdot   u^{n} \omega^{n}   \quad I_{2}^n := \int_{\mathcal{F}_{0,+}}   \nabla \phi  \cdot   (  \ell^{n}_1 \nabla \Phi_1 -   \ell^{n})\omega^{n}    .
\end{eqnarray*}
Using Lemma \ref{Never00} with $\Psi  :=  \nabla^\perp \phi$,  we get 
\begin{eqnarray*}
I_{1}^n = \frac{1}{2}  \int_{\Gamma_+} | u  \cdot  \mathbf{n}^{\perp} |^{2}  \nabla \phi  \cdot \mathbf{n}^{\perp} ds  
 - \int_{\mathcal{F}_{0,+}}  u^{n}  \cdot  (  u^{n}  \cdot      \nabla  \Psi ) .
\end{eqnarray*}
We now integrate in time \eqref{voili} to get 
\begin{eqnarray*}
 \frac{1}{2} \int_{0}^{T} \int_{\Gamma_+} | u^{n}  \cdot  \mathbf{n}^{\perp} |^{2}  \nabla \phi  \cdot \mathbf{n}^{\perp} ds &\leq& 
-  \int_{0}^{T} I_{2}^n
  +  \int_{\mathcal{F}_{0,+}}  \phi \omega^{n} (T,\cdot ) -  \int_{\mathcal{F}_{0,+}}  \phi \omega^{n}_{0}
 + \int_{0}^{T}  \int_{\mathcal{F}_{0,+}}  u^{n}  \cdot  (  u^{n}  \cdot      \nabla  \Psi ) .
 \end{eqnarray*}
It remains to use trivial bounds and 
\eqref{conservvelo}-\eqref{conservvorty} to conclude.
\end{proof}
Using a smooth perturbation of $\arctan (x_1 )$ instead of $\phi$ yields that for any compact $K \subset \Gamma_+$,  there exists $C>0$  such that for any $n$, 
\begin{eqnarray}
\label{ogt}
\int_{0}^{T}  \int_{K} | u^{n}  \cdot  \mathbf{n} |^{2} ds \leq C  .
\end{eqnarray}
Then one infers \eqref{identity2y}  from \eqref{conservvelo} and \eqref{ogt} following exactly the proof of 
  Lemma $2$. in \cite{LNX06}.
  This achieves the proof of Theorem \ref{DelortBody}.

 \section*{Appendix}
 
\subsection*{Proof of  \eqref{craz}}
Let us denote by
%
$L:=- \Psi \cdot \nabla (  \frac{1}{2} | u |^{2} ) + u \cdot ( u \cdot \nabla \Psi )$ and  by $R :=  u^\perp \cdot \nabla (\Psi^\perp \cdot u )  $.
%
We extend $L$ and $R$ into 
\begin{eqnarray*}
L =     \sum_{i=1}^8 L_{i} 
= - \Psi_{1} u_{1} \partial_{1} u_{1}  - \Psi_{1} u_{2} \partial_{1} u_{2}  - \Psi_{2} ( \partial_{2}  u_{2} )  u_{2} -  \Psi_{2} ( \partial_{2}  u_{1} )  u_{1}
+ u_{1}^{2}  \partial_{1}  \Psi_{1} + u_{1} u_{2}  \partial_{1}    \Psi_{2}  + u_{1} u_{2}  \partial_{2}    \Psi_{1}  + u_{2}^{2}  \partial_{2}  \Psi_{2} ,
\\ R  =    \sum_{i=1}^8  R_{i} 
=  u_{2} (\partial_{1} \Psi_{2})  u_{1} + u_{2}  \Psi_{2}  \partial_{1} u_{1} - u_{2}^{2}  \partial_{1}  \Psi_{1} - u_{2}   \Psi_{1} \partial_{1} u_{2} 
-  u_{1}^{2} \partial_{2}  \Psi_{2} - u_{1}  \Psi_{2}  \partial_{2} u_{1} +  \Psi_{1} u_{2}  \partial_{2}  u_{2} +  u_{1} u_{2}  \partial_{2}    \Psi_{1}  ,
\end{eqnarray*}
and observe that  $L_{1} = R_{7}$,  $L_{2} = R_{4}$,  $L_{3} = R_{2}$,  $L_{4} = R_{6}$,  $L_{5} = R_{5}$, $L_{6} = R_{1}$, $L_{7} = R_{8}$, $L_{8} = R_{3}$, where we use that $u$ is divergence free for the first and third equalities, and that $\Psi$ is divergence free for the fifth and last equalities.

 \subsection*{Proof of Lemma
\ref{weakc}}
 We follow  the strategy of Appendix C. of \cite{lions}.
Let $B(0,R)$ be a ball of $X'$ containing all the values  $f_n (t)$ for all $t\in[0,T ]$, for all $n \in \N$. 
Since $X$ is separable, this ball is a compact metric space for the $w^*$ topology.
Moreover, since $Y$ is dense in $X$, one distance on this metric space is given as follows: 
let $(\phi_k )_{k\geq 1}$ be a  sequence of $Y$ dense in  $X$, and define, for $f,g$ in  $B(0,R)$,
\begin{eqnarray*}
d(f,g) := \sum_{k\geq 1} \frac{1}{2^k} \frac{| <f-g , \phi_k>_{X' ,X} | }{1+ |<f-g ,\phi_k >_{X' ,X}  | } .
\end{eqnarray*}
 Let $\eps >0$ and $k$ such that $\frac{1}{2^k} < \eps$. For any $t,s \in[0,T ]$, for all $n \in \N$,
\begin{eqnarray*}
d(f_{n} (t),f_{n} (s)) \leq \sup_{1 \leq j \leq k} | < f_{n} (t) - f_{n} (s) , \phi_j >_{X' ,X} |  +  \eps .
\end{eqnarray*}
 But using now that  $(\partial_t f_n )_n $ is  bounded in $L^{\infty}((0,T); Y')$ we get that 
 $$\sup_{1 \leq j \leq k} | < f_{n} (t) - f_{n} (s) , \phi_j >_{X' ,X} |  \rightarrow 0 \text{ when } t-s  \rightarrow 0 .$$
 Therefore the sequence  $(f_n )_n $ is  equicontinuous in $C( [0,T ];  B(0,R) - w^* )$. Thanks to the Arzela-Ascoli theorem we deduce the desired result.
 \subsection*{Proof of Lemma \ref{gerard}}
\label{gerardsec}
Let us denote by $I_\eps := \int_{X} fd\mu_{\eps} - \int_{X} fd\mu$. 
Let $\eta >0$. We are going to prove that for $\eps$ small enough, $| I_\eps | \leq 4 \eta$.
Let $M :=  \nu  (X) +   \sup_\eps \nu_{\eps} (X) $ which is finite by the Banach-Steinhaus theorem. 
Since $f$ is assumed to be decreasing at infinity, there exists a compact subset $K$ of $X$ such that $| f | \leq \eta/M $ on $X \setminus K$. 
Let us decompose $I_\eps $ into $I_\eps = I_\eps^1 + I_\eps^2$ with 
\begin{eqnarray*}
I_\eps^1  := \int_{X \setminus K} fd\mu_{\eps} - \int_{X \setminus K} fd\mu  \text{ and } 
 I_\eps^2  :=  \int_{K} fd\mu_{\eps} - \int_{K} fd\mu. 
 \end{eqnarray*}
 First we have  $| I_\eps^1 | \leq \eta$ thanks to the previous choice of $K$. It therefore remains to prove that for $\eps$ small enough, $| I_\eps^2 | \leq 3 \eta$.

 Now, let us introduce a smooth cut-off function $\xi$ on $\R$ such that $\xi (x) = 1$ for $| x | \leq 1$ and  $\xi (x) = 0$ for  $| x | \geq 2$. 
 We denote, for $\delta >0$ and $x \in X$, $\beta_\delta (x) := \xi ( \frac{\dist(x,F)}{\delta} )$. 
 We decompose $I_\eps^2$ into $I_\eps^2 = I_{\eps,\delta}^3 + I_{\eps,\delta}^4$, where 
 \begin{eqnarray*}
  I_{\eps,\delta}^3 :=  \int_{K} \beta_\delta  fd\mu_{\eps} - \int_{K}\beta_\delta  fd\mu \text{ and } 
   I_{\eps,\delta}^4 :=  \int_{K} (1-\beta_\delta ) fd\mu_{\eps} - \int_{K}(1-\beta_\delta )   fd\mu . 
 \end{eqnarray*}
   We have  
\begin{eqnarray*}
| I_{\eps,\delta}^3 | \leq \| f  \|_\infty (  \int_{K} \beta_\delta  d\nu_{\eps} +  \int_{K} \beta_\delta  d\nu )
\leq 2 \| f  \|_\infty   \int_{K} \beta_\delta  d\nu +  \eta
\end{eqnarray*}
for $\eps$ small enough, by weak-* convergence.
Since $\nu (F)=0$ there exists $\delta >0$ such that $2 \| f  \|_\infty   \int_{K} \beta_\delta  d\nu \leq  \eta$. 

Now using for this $\delta$ that $ (1-\beta_\delta ) f$ is continuous on $X$  and that $(\mu_{\eps})_{\eps}$ is  converging to $\mu$ weakly-* in $\mathcal{BM} (X)$, we get $ | I_{\eps,\delta}^4  |\leq  \eta$ for $\eps$ small enough.

Gathering all the estimates yields the result.
\\ 
\\ {\bf Acknowledgements.} The author was partially supported by the Agence Nationale de la Recherche, Project CISIFS, 
grant ANR-09-BLAN-0213-02. 

\end{document}